\renewcommand{\bar}{\overline}
\renewcommand{\bar}{\overline}
\def \r{\mathbb R}
\def \q{\mathbb Q}
\def \z{\mathbb Z}
\def \d{\mathcal D}
\DeclareMathOperator{\sign}{sign}
\newtheorem{theorem}{Theorem}[section]
\newtheorem{lemma}[theorem]{Lemma}
\newtheorem{proposition}[theorem]{Proposition}
\newtheorem{corollary}[theorem]{Corollary}
\theoremstyle{remark}
\newtheorem{remark}[theorem]{Remark}
\theoremstyle{definition}
\newtheorem{definition}[theorem]{Definition}
\newtheorem{example}[theorem]{Example}
\newtheorem{problem}{Problem}
\newtheorem{conjecture}[problem]{Conjecture}
\title{On Asymptotic Reducibility in $SL(3,\z)$}
\author{Oleg~Karpenkov}
\date{18 May 2011}
\keywords{Gauss Reduction Theory, Klein-Voronoi continued
fractions, convex hulls, Hessenberg matrices, Markoff-Davenport
characteristic}
\email[Oleg Karpenkov]{karpenk@mccme.ru}
\address{TU Graz /Kopernikusgasse 24, A 8010 Graz, Austria/}
\begin{document}
\input epsf

\begin{abstract}
Recently we showed that Hessenberg matrices are proper to
represent conjugacy classes in $SL(n,\z)$. In this paper we focus
on the reducibility properties in the set of Hessenberg matrices
of $SL(3,\z)$. We investigate the first interesting open case
here: the case of matrices having one real and two complex
conjugate eigenvalues.
\end{abstract}

\maketitle \tableofcontents

\section*{Introduction}\label{intro}

In current paper we study the $SL(3,\z)$ integer conjugacy classes
via {\it reduced representatives}. Recall that two matrices $M_1$
and $M_2$ in $SL(3,\z)$ are {\it integer conjugate} if there
exists a matrix $X$ in $GL(3,\z)$ such that
$$
M_2=XM_1X^{-1}.
$$
For matrices with a pair of complex conjugate eigenvalues we
discovered the following phenomenon: {\it Hessenberg matrices
distinguish corresponding conjugacy classes asymptotically}
(Theorem~\ref{maintheorem}). We show that a similar statement is
not true for the case of operators with three real eigenvalues.

\vspace{2mm}

{\noindent {\bf Background.} In classical approach to
$SL(n,\z)$-conjugacy problem one splits $SL(n,\q)$-conjugacy
classes into $SL(n,\z)$ conjugacy classes. After that the problem
is reduced to certain problems related to orders of algebraic
fields extended by the roots of characteristic polynomial of the
corresponding matrices (like computing their class numbers, etc.).

In~\cite{Karpenkov2012} we introduced multidimensional analog of
Gauss Reduction Theorem, it is an alternative approach to the
conjugacy problem (for two-dimensional Gauss Reduction Theory we
refer to~\cite{Karpenkov2010}, \cite{Lewis1997},
and~\cite{Manin2002}). In multidimensional Gauss Reduction Theory
the key role play Hessenberg matrices, they generalize reduced
matrices in Gauss Reduction Theory. Hessenberg matrices are
matrices that vanish below the superdiagonal (for more information
see in~\cite{Stoer2002}). They appear in the
work~\cite{Hessenberg1942} by K.~Hessenberg for the first time,
they were later used in QR-algorithms (\cite{Demmel1997},
\cite{Trefethen1997}, \cite{Ortega1963/1964}).
In~\cite{Karpenkov2012} we defined a natural notion of Hessenberg
complexity for Hessenberg matrices, which is a nonnegative integer
function, and showed that {\it each integer conjugacy class of
irreducible matrices has only finite number of Hessenberg matrices
with minimal Hessenberg complexity}. }

\vspace{2mm}

{\noindent {\bf Description of the paper.} We start in
Section~\ref{Definitions} with general definitions and notation.
Further in Section~\ref{results} we formulate main results of
current paper: Theorem~\ref{t2parab} on parabolic structure of the
sets of Hessenberg matrices with two complex conjugate
eigenvalues, and Theorem~\ref{maintheorem} on asymptotic
reducibility of matrices in these sets. In Section~\ref{parab} we
prove Theorem~\ref{t2parab}. Further in Section~\ref{kvcf} we show
some necessary tools that we use in the proof of
Theorem~\ref{maintheorem} (Markoff-Davenport characteristic,
Klein-Voronoi continued fractions, etc.). Then in
Section~\ref{structure} we give a proof of
Theorem~\ref{maintheorem}. Finally in Section~\ref{Open} we
formulate several open problems.

\vspace{2mm}

{\noindent {\bf Acknowledgment.} The work is partially supported
by FWF grant M~1273-N18. The author is grateful to
E.~I.~Pav\-lov\-skaya and  H.~W.~Lenstra for useful remarks.}

\section{Definitions and notation}\label{Definitions}

\subsection{$\varsigma$-reduced matrices}
A matrix $M$ of the form
$$
\left(
\begin{array}{ccc}
a_{11} &a_{12} &a_{13}\\
a_{21} &a_{22} &a_{23}\\
0       &a_{32} &a_{33}\\
\end{array}
\right)
$$
is called an {\it $($upper$)$ Hessenberg} matrix. We say that the
{\it Hessenberg type} of $M$ is
$$
\langle a_{11},a_{21}|a_{12},a_{22},a_{32} \rangle.
$$

\begin{definition}
A Hessenberg matrix in $SL(3,\z)$ is said to be {\it perfect} if
we have
$$
\begin{array}{l}
0\le a_{11}<a_{21};\\
0\le a_{12}<a_{32};\\
0\le a_{22}<a_{32}.\\
\end{array}
$$
\end{definition}

\begin{definition}
The {\it Hessenberg complexity} of a Hessenberg matrix $M$ of
Hessenberg type $\langle a_{11},a_{21}|a_{12},a_{22},a_{32}
\rangle$ is the number $a_{12}^2a_{23}$, we denote it by
$\varsigma(M)$.
\end{definition}
Notice that $\varsigma(M)$ equals the volume of the parallelepiped
spanned by the following vectors $v=(1,0,0)$, $M(v)$, and
$M^2(v)$.

\begin{definition}
We say that a perfect Hessenberg matrix $M$ is {\it
$\varsigma$-reduced} if its Hessenberg complexity is the least
possible. Otherwise we say that the matrix is {\it
$\varsigma$-nonreduced}.
\end{definition}

In~\cite{Karpenkov2012} we proved the following result.

\begin{theorem}
{\it i$)$}. Any conjugacy class of $SL(3,\z)$ contains a
$\varsigma$-reduced matrix.

{\it ii$)$}. The number of $\varsigma$-reduced matrices is finite
in any integer conjugacy class.
\end{theorem}

The results of current paper give evidences concerning the fact
that the majority of Hessenberg matrices with two complex
conjugate and one real eigenvalues are $\varsigma$-reduced.

\subsection{Perfect Hessenberg matrices of a given Hessenberg type}

In this paper we study three-dimensional perfect Hessenberg
$SL(3,\z)$-matrices with irreducible characteristic polynomials.
There are two main geometrically essentially different cases of
$SL(3,\z)$-matrices: the {\it real spectrum} (or {\it RS-} for
short) {\it case} when the characteristic polynomials of matrices
have only real eigenvalues, and the {\it nonreal spectrum} (or
{\it NRS-} for short) {\it case} of matrices with a pair of
complex conjugate and one real eigenvalues.

\vspace{2mm}

Denote the set of all $SL(3,\z)$-matrices of a fixed a Hessenberg
type $\Omega$ by $H(\Omega)$. Let $H_v(\Omega)$ be the subset of
all NRS-matrices in $NRS(\Omega)$.

\begin{definition}
Let $\Omega=\langle a_{11},a_{21}|a_{12},a_{22},a_{32}\rangle$.
Consider $v=(a_{13},a_{23},a_{33})$ such that the determinant of
the matrix $(a_{ij})$ equals 1. Denote
$$
H_\Omega^{v}(m,n)= \left(
\begin{array}{ccr}
a_{11} &a_{12} &a_{11}m+a_{12}n+a_{13}\\
a_{21} &a_{22} &a_{21}m+a_{22}n+a_{23}\\
0       &a_{32} &       a_{32}n+a_{33}\\
\end{array}
\right).
$$
\end{definition}

It is clear that
$$
H(\Omega)=\big\{
H_\Omega^{v}(m,n) \big| m\in\z, n\in\z
\big\}
$$
Here to choose $v$ means to choose the origin $O$ in the plane
$H(\Omega)$. So the set $H(\Omega)$ has the structure of
two-dimensional plane. We denote by $OMN$ the coordinate system
corresponding to the parameters $(m,n)$.

\vspace{2mm}

Let $\d^{v}_\Omega(m,n)$ denote the discriminant of the
characteristic polynomial of $H^{v}_\Omega(m,n)$. Then the set
$NRS(\Omega)$ is defined by the following inequality in variables
$n$ and $m$:
$$
\d^{v}_\Omega(m,n)<0.
$$

\begin{example} In Figure~\ref{family.1} we show the subset of
NRS-matrices $NRS(\langle 0,1|0,0,1\rangle)$. For this example we
choose $v=(0,0,1)$.
\end{example}

\begin{figure}
$$\epsfbox{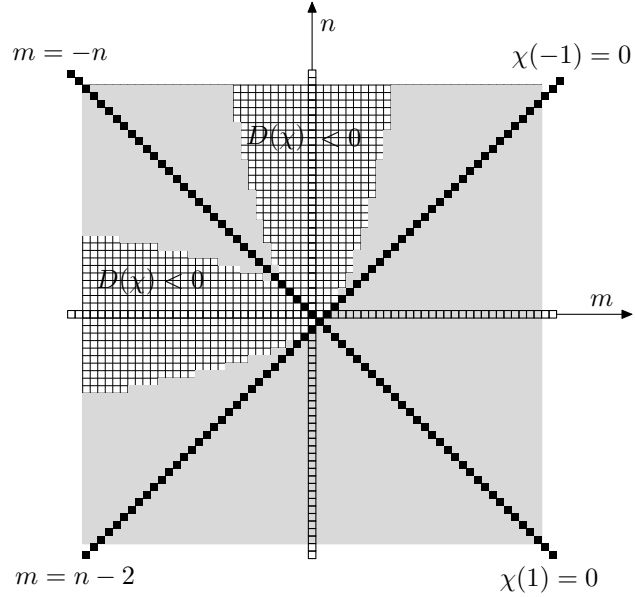}$$
\caption{The family of matrices of Hessenberg type $\langle
0,1|0,0,1\rangle$.}\label{family.1}
\end{figure}

\section{Formulation of main results and examples}\label{results}

We start in Subsection~\ref{parabolic_ssec} with the formulation
of a supplementary theorem on parabolic structure of the set of
NRS-matrices, we give the proof later in Section~\ref{parab}.
Further in Subsection~\ref{NonHypRay} we formulate the main result
on asymptotic uniqueness of $\varsigma$-reduced matrices, the
proof is shown in Section~\ref{structure}. In
Subsection~\ref{examplesHess} with describe examples of families
of matrices with fixed Hessenberg type.

\subsection{Parabolic structure of the set of NRS-matrices}\label{parabolic_ssec}
The set $NRS(\langle 0,1|0,0,1\rangle)$ on Figure~\ref{family.1}
''reminds'' the set of points with integer coordinates in the
union of the convex hulls of two parabolas. Let us formalize this
in a general statement.

\vspace{2mm}

Consider the matrix $H_{\Omega}^{v}(0,0)=(a_{ij})$ and define
$b_1$, $b_2$, and $b_3$ as coefficients of characteristic
polynomial of this matrix in variable $t$:
$$
-t^3+b_1t^2-b_2t+b_3.
$$
In the case of $SL(3,\z)$ we have $b_3=1$, nevertheless we write
$b_3$ for generality reasons. For the family $H^{v}_\Omega(m,n)$
we define the following two quadratic functions
$$
\begin{array}{l}
\displaystyle
p_{1,\Omega}(m,n)=m-\alpha_1n^2-\beta_1n-\gamma_1;\\
\displaystyle p_{2,\Omega}(m,n)=\frac{n}{a_{21}}-
\alpha_2\Big(\frac{a_{21}m-a_{11}n}{a_{21}}\Big)^2-
\beta_2\Big(\frac{a_{21}m-a_{11}n}{a_{21}}\Big)-\gamma_2,\\
\end{array}
$$
where
$$
\left\{
\begin{array}{l}
\displaystyle
\alpha_1=-\frac{a_{32}}{4a_{21}}\\
\displaystyle
\beta_1=\frac{a_{11}-a_{22}-a_{33}}{2a_{21}}\\
\displaystyle
\gamma_1=\frac{4b_2-b_1^2}{4a_{21}a_{32}}\\
\end{array}
\right.
;\qquad
\left\{
\begin{array}{l}
\displaystyle
\alpha_2=\frac{a_{32}a_{21}}{4b_3}\\
\displaystyle
\beta_2=-\frac{b_2}{2b_3}\\
\displaystyle
\gamma_2=\frac{b_2^2-4b_1b_3}{4a_{21}a_{32}b_3}\\
\end{array}
\right.
.
$$

Denote by $B_R(O)$ the interior of the circle of radius $R$
centered at the origin $(0,0)$ in the real plane $OMN$ of the
family $H^{v}_{\Omega}(m,n)$. For a real number $t$ we denote
$$
\Lambda_t=\{(m,n)\mid
(p_{1,\Omega}(m,n)-t)(p_{2,\Omega}(m,n)-t)<0\}.
$$

\begin{theorem}\label{t2parab}
For any positive $\varepsilon$ there exists $R>0$ such that in the
complement to $B_R(O)$ the following inclusions hold
$$
\Lambda_\varepsilon%
\subset NRS(\Omega) \subset
\Lambda_{-\varepsilon}.
$$
\end{theorem}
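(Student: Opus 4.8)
The plan is to reduce the statement to an elementary estimate for the zero locus of the cubic discriminant, rewritten as a single fixed quartic polynomial, and then to run a squeezing argument against the two parabolas. First I would record how the characteristic polynomial depends on $(m,n)$. Expanding $H_\Omega^v(m,n)$ one checks that $b_3=\det=1$, while the trace $b_1$ and the second coefficient $b_2$ are \emph{affine} in $(m,n)$: explicitly $b_1=a_{32}n+(a_{11}+a_{22}+a_{33})$ and $b_2=-a_{21}a_{32}m+a_{11}a_{32}n+(\text{const})$. Hence $(m,n)\mapsto(b_1,b_2)$ is an affine isomorphism (its Jacobian equals $a_{21}a_{32}^2\neq 0$), and $\d_\Omega^v(m,n)=\Delta(b_1,b_2)$ is the pullback of the single quartic
$$
\Delta(b_1,b_2)=b_1^2b_2^2-4b_1^3-4b_2^3+18b_1b_2-27 .
$$
Since the map is affine and invertible it suffices to prove the analogue of the theorem in the $(b_1,b_2)$-plane and transport it back; the complement of a large disk $B_R(O)$ is comparable, up to the singular values of the map, to the complement of a large disk in the $(b_1,b_2)$-plane.

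The key step is the identity
$$
\Delta(b_1,b_2)=16\,P_1P_2+\bigl(2b_1b_2-27\bigr),\qquad P_1=b_2-\tfrac14 b_1^2,\quad P_2=b_1-\tfrac14 b_2^2,
$$
which one verifies by direct expansion. A short further computation shows that $P_1$ and $P_2$ are, up to the nonzero constant factors $\mp a_{21}a_{32}$, exactly the pullbacks of $p_{1,\Omega}$ and $p_{2,\Omega}$, so that $\{P_1=0\}$ and $\{P_2=0\}$ are the two parabolas $\{p_{1,\Omega}=0\}$ and $\{p_{2,\Omega}=0\}$. The content of the identity is that $\d_\Omega^v$ is a \emph{fixed degree-two perturbation} of the reducible quartic $16P_1P_2$, whose zero set is precisely the union of the two parabolas. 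Thus the whole theorem becomes the assertion that adding the bounded-degree term $2b_1b_2-27$ moves the zero locus, and the accompanying sign pattern, by an amount that tends to $0$ outside large disks.

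For the squeezing argument I would split the complement of $B_R$ into a neighborhood of the two parabolas and its exterior. Away from both parabolas one has lower bounds $|P_1|,|P_2|\ge\rho(b_1,b_2)$ with $\rho$ a growing function of the coordinates, so that $16|P_1P_2|$ dominates $|2b_1b_2-27|$; there $\sign\Delta=\sign(P_1P_2)$, and such points lie on the same side of both boundary curves of $\Lambda_{\varepsilon}$ and $\Lambda_{-\varepsilon}$. In a neighborhood of, say, $\{P_1=0\}$ the transverse factor $P_2$ is large: on $\{b_2=\tfrac14 b_1^2\}$ one has $P_2\sim-\tfrac1{64}b_1^4$. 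Solving $\Delta=0$ for $P_1$ there gives $P_1=(27-2b_1b_2)/(16P_2)=O(1/|b_1|)$, so the discriminant curve stays within $O(1/|b_1|)$ of the parabola, and symmetrically near $\{P_2=0\}$. Choosing $R$ large enough makes these deviations smaller than the width of the $\varepsilon$-collar, which yields $\Lambda_\varepsilon\subset NRS(\Omega)\subset\Lambda_{-\varepsilon}$ outside $B_R$ after transporting back through the affine map.

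The main obstacle is the analysis in the transition zones and the bookkeeping that turns the $P_i$-deviation into the metric collar. Precisely, I must make both the lower bounds away from the parabolas and the growth $P_2\sim-\tfrac1{64}b_1^4$ uniform, control the region where the two parabola-neighborhoods might meet (near the cusp of $\{\Delta=0\}$, which however sits at bounded distance and is absorbed into $B_R$), and translate the estimate ``$|P_1|<\delta$ on the curve'' into ``the curve lies between $\{p_{1,\Omega}=\pm\varepsilon\}$'', accounting for the affine distortion and the quadratic stretching of each parabola. Pinning down these uniform constants, rather than the algebra, is where the real work lies.
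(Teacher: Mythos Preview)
Your proposal is correct and follows the same strategy as the paper: both rest on (i) an affine reduction to a single normalized quartic discriminant, and (ii) the factorization of that quartic as $16P_1P_2$ plus a degree-two remainder, which is exactly the content of the paper's Remark preceding Lemma~4.1 and of Lemma~4.1 itself in the case $\Omega_0$. The paper carries out the reduction in two steps---first conjugating to $\Omega_0=\langle 0,1|0,0,1\rangle$ via Lemma~4.2, then using the explicit sum-of-squares identities of Lemma~4.1---whereas you go directly to the $(b_1,b_2)$-plane; since for $\Omega_0$ one has $(b_1,b_2)=(n,-m)$, these are the same reduction. In the endgame the two arguments are dual: the paper evaluates $\d$ along the shifted parabolas $p_i=\pm\varepsilon$, reads off the sign from the leading coefficient $\tfrac14 a_{21}a_{32}^5\varepsilon\, t^4$, and then controls the number of branches by intersecting $\{\d=0\}$ with a pencil of lines and invoking Vi\`ete; you instead solve $\Delta=0$ for $P_1$ near each parabola and bound the deviation by $O(1/|b_1|)$. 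The paper's branch-counting via Vi\`ete is the precise substitute for what you flag as the ``transition-zone'' bookkeeping, and your observation that the cusp of $\{\Delta=0\}$ sits at bounded distance (hence inside $B_R$) is what makes that step routine.
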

We give a proof of this theorem in Section~\ref{parab}.

\subsection{Theorem on asymptotic uniqueness of $\varsigma$-reduced
NRS-matrices}\label{NonHypRay}

A point is called {\it integer} if all its coordinates are
integers. A ray is said to be {\it integer} if its vertex is
integer and it contains integer points distinct to the vertex.

\begin{definition}
An integer ray in $H(\Omega)$ is said to be {\it an NRS-ray} if
all its integer points correspond to NRS-matrices. A direction is
said to be {\it asymptotic} for the set $NRS(\Omega)$ if there
exists an NRS-ray with this direction.
\end{definition}
As it is stated in Theorem~\ref{t2parab}, for any Hessenberg type
$\Omega$ the set $NRS(\Omega)$ almost coincides with the union of
the convex hulls of two parabolas. This implies the following
statement.

\begin{proposition}\label{asdir} Let $\Omega=\langle
a_{11},a_{21}|a_{12},a_{22},a_{32}\rangle$. There are exactly two
asymptotic directions for the set $NRS(\Omega)$, they are defined
by the vectors $(-1,0)$ and $(a_{11},a_{21})$. \qed
\end{proposition}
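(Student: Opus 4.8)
The plan is to read the two asymptotic directions off the geometric description supplied by Theorem~\ref{t2parab}. First I would reformulate the notion of an asymptotic direction: a direction $d$ is asymptotic if and only if some ray with direction $d$ has every integer point in $NRS(\Omega)$, and since sliding the vertex along $d$ leaves the direction unchanged, this is equivalent to a tail of some ray in direction $d$ lying in $NRS(\Omega)$. By Theorem~\ref{t2parab}, outside a disk $B_R(O)$ the set $NRS(\Omega)$ is trapped between $\Lambda_\varepsilon$ and $\Lambda_{-\varepsilon}$ for every $\varepsilon>0$; geometrically this means that, up to a bounded set, $NRS(\Omega)$ coincides with the union $P_1\cup P_2$ of the convex hulls of the two parabolas $\{p_{1,\Omega}=0\}$ and $\{p_{2,\Omega}=0\}$. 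Thus the task reduces to finding all directions of rays that are eventually contained in $P_1\cup P_2$.

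Next I would determine the axis of each parabolic region. The hull $P_1$ is bounded by the curve $m=\alpha_1 n^2+\beta_1 n+\gamma_1$; perfectness forces $a_{21}\ge 1$ and $a_{32}>0$, so $\alpha_1=-a_{32}/(4a_{21})<0$ and the parabola opens toward decreasing $m$. Hence the recession cone of the convex set $P_1$ is the single ray spanned by $(-1,0)$. For $P_2$ I would pass to the coordinates $u=(a_{21}m-a_{11}n)/a_{21}$ and $n/a_{21}$ in which $p_{2,\Omega}$ is written; there the leading coefficient is $\alpha_2=a_{32}a_{21}/(4b_3)>0$, so the bounding parabola opens toward increasing $n$ along the lines $u=\mathrm{const}$. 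Since $u$ is constant exactly along the direction $(a_{11},a_{21})$ and $a_{21}>0$, the recession cone of $P_2$ is the single ray spanned by $(a_{11},a_{21})$. As $a_{21}\ge 1$, the two axes $(-1,0)$ and $(a_{11},a_{21})$ are distinct.

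Then I would prove that these are exactly the asymptotic directions. A line meets a convex set in an interval, so the intersection of any ray with $P_1$ and with $P_2$ is an interval along that ray; if a whole tail of the ray lies in $P_1\cup P_2$, one of these two intervals must be an unbounded sub-ray, so the ray is eventually contained in a single $P_i$. A ray contained in a parabolic region points along its axis, which yields the direction $(-1,0)$ or $(a_{11},a_{21})$. For the converse I would take an integer vertex far out on the axis of $P_1$ (respectively $P_2$): the axis ray lies in the interior of the parabolic region, and by Theorem~\ref{t2parab} its points beyond $B_R(O)$ belong to $NRS(\Omega)$; since both axis directions are integer vectors, this produces genuine NRS-rays. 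Hence there are precisely the two claimed asymptotic directions.

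The main obstacle I expect is the passage from the $\varepsilon$-sandwich of Theorem~\ref{t2parab} to a conclusion independent of $\varepsilon$ and of the bounded discrepancy between $NRS(\Omega)$ and $P_1\cup P_2$. Concretely, I must verify that replacing the threshold $0$ by $\pm\varepsilon$ merely shifts the two parabolas $\{p_{1,\Omega}=\pm\varepsilon\}$ and $\{p_{2,\Omega}=\pm\varepsilon\}$ without rotating their axes, so that the recession cone $\{(-1,0),(a_{11},a_{21})\}$ is the same for all small $\varepsilon$, and that the bounded lens $P_1\cap P_2$ introduces no extra recession direction. A secondary technical point is integrality: one has to check that each axis ray contains infinitely many integer points inside $NRS(\Omega)$, which holds because $(-1,0)$ and $(a_{11},a_{21})$ are primitive integer vectors and the interior of each $P_i$ contains a full integer sub-ray along its axis.
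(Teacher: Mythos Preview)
Your proposal is correct and follows precisely the route the paper indicates: the paper gives no detailed argument at all, merely stating that the proposition follows from Theorem~\ref{t2parab} (``the set $NRS(\Omega)$ almost coincides with the union of the convex hulls of two parabolas'') and appending \qed. Your computation of the axes of the two parabolic regions, the convexity argument reducing a ray in $P_1\cup P_2$ to a ray in a single $P_i$, and the recession-cone identification are exactly the details one must supply to turn that sentence into a proof.

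One minor remark: for the existence half you invoke $\Lambda_\varepsilon\subset NRS(\Omega)$, but a ray along the axis of $P_1$ eventually has both $p_{1,\Omega}$ and $p_{2,\Omega}$ of the same sign, so it need not literally lie in $\Lambda_\varepsilon$ as defined. It is cleaner to use the informal description you already quoted (the symmetric difference $P_1\cup P_2$ and $NRS(\Omega)$ is bounded), or simply to observe that since $P_1\cap P_2$ is bounded the sets $\Lambda_{\pm\varepsilon}$ and $P_1\cup P_2$ agree outside a ball; either way your conclusion stands.
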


Let us consider a Hessenberg type $\Omega=\langle
a_{11},a_{21}|a_{12},a_{22},a_{32}\rangle$ and an appropriate
integer vector $v$.

\begin{definition}\label{rays}
Consider a family of Hessenberg matrices $H_\Omega^{v}$. Denote
$$
\begin{array}{l}
R^{m,n}_{1,\Omega,v}=\big\{H_{\Omega}^{v}(m{-}t,n)\big|t\in\z_{\ge 0}\};\\
R^{m,n}_{2,\Omega,v}=\big\{H_{\Omega}^{v}(m{+}a_{11}t,n{+}a_{21}t)\big|t\in\z_{\ge 0}\}.\\
\end{array}
$$
By $R^{m,n}_{1,\Omega,v}(t)$ or respectively by
$R^{m,n}_{2,\Omega,v}(t)$ we denote the $t$-th element in the
corresponding family.
\end{definition}

\begin{remark}
The families $R^{m,n}_{1,\Omega,v}$ and $R^{m,n}_{2,\Omega,v}$
coincide with the sets of all integer points of some rays with
directions $(-1,0)$ and $(a_{11},a_{21})$ respectively.
Conversely, from Proposition~\ref{asdir} it follows that the set
of integer points of any NRS-ray coincides either with
$R^{m,n}_{1,\Omega,v}$ or with $R^{m,n}_{2,\Omega,v}$ for some
integers $m$ and $n$.
\end{remark}

On Figure~\ref{NRSfig} we show in dark gray two NRS-rays:
$R^{-9,5}_{1,\langle 1,2|1,1,3 \rangle,(0,0,-1)}$ from the left
and $R^{-2,-1}_{2,\langle 1,2|1,1,3 \rangle,(0,0,-1)}$ from the
right.

\vspace{2mm}

Now we are ready to formulate the main result on asymptotic
behavior of NRS-matrices, we prove it later in
Section~\ref{structure}.

\begin{theorem}\label{maintheorem}{\bf(On asymptotic $\varsigma$-reducibility and uniqueness.)}
{\it i$)$}. Any NRS-ray $($as on Figure~\ref{NRSfig}$)$ contains
only finitely many $\varsigma$-nonreduced matrices.
\\
{\it ii$)$}. Any NRS-ray contains only finitely many matrices that
have more than one integer conjugate $\varsigma$-reduced matrix.
\end{theorem}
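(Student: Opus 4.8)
The plan is to reduce both statements to an asymptotic analysis of a single conjugacy invariant along the ray and to compare that invariant with the Hessenberg complexity. By Proposition~\ref{asdir} and the remark following it, every NRS-ray is, after translating the base point, one of the two families $R^{m,n}_{1,\Omega,v}$ or $R^{m,n}_{2,\Omega,v}$, so I would fix one of them and write $M_t$ for its $t$-th matrix. Along such a family the entry $a_{12}$ is constant while the entry $a_{23}$ is affine in $t$, so $\varsigma(M_t)=a_{12}^2a_{23}(t)$ is an affine function of $t$; the content of the theorem is that this (generically growing) complexity is nevertheless minimal within the varying conjugacy class of $M_t$.

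First I would recall from Section~\ref{kvcf} that the Markoff-Davenport characteristic $\Delta$ is an $SL(3,\z)$-conjugacy invariant, realized as the infimum over nonzero integer points of a form $\phi_M$ built from the real eigendirection and the complex eigenplane of $M$, and that the Hessenberg complexity of every matrix in the class is bounded below by an explicit function of $\Delta$. Moreover $M$ is $\varsigma$-reduced precisely when this lower bound is attained, equivalently when the minimum of $\phi_M$ over the lattice is realized at the orbit of $e_1=(1,0,0)$ under $M$. The key step is then an asymptotic computation: using the parabolic description of $NRS(\Omega)$ from Theorem~\ref{t2parab} to control the position of the ray relative to the two bounding parabolas, I would expand the eigendata of $M_t$ and the shape of $\phi_{M_t}$ as $t\to\infty$ and show that the minimizing lattice configuration stabilizes to the one attached to $e_1$. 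Since $\varsigma$ takes values in a discrete set, an asymptotic inequality between $\varsigma(M_t)$ and the lower bound coming from $\Delta$ upgrades to exact equality for all large $t$, which is part i$)$.

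For part ii$)$ I would pass to the Klein-Voronoi continued fraction of the class of $M_t$, whose sail is the geometric carrier of the invariant $\Delta$: the $\varsigma$-reduced matrices in the class correspond to the minima of $\phi_{M_t}$, that is, to certain distinguished vertices of the sail taken modulo the symmetry group generated by $M_t$. The same asymptotic analysis shows that as $t\to\infty$ exactly one such minimal vertex survives up to this symmetry, so the number of $\varsigma$-reduced representatives drops to one for all but finitely many $t$.

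The main obstacle is the asymptotic control of the minima of $\phi_{M_t}$ as the conjugacy class degenerates along the ray: one must rule out competing integer points that could realize a complexity equal to or smaller than $\varsigma(M_t)$, and, for the uniqueness statement, establish that the combinatorics of the Klein-Voronoi sail stabilizes so that a single minimal vertex remains. This is where the precise parabolic estimates of Theorem~\ref{t2parab} and the periodicity structure of the Klein-Voronoi continued fraction must be combined quantitatively, and it is the part I expect to require the most care.
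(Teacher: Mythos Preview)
Your overall architecture (MD-characteristic plus Klein--Voronoi sails, via Theorem~\ref{cfr}) matches the paper, but two points deserve correction.

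First, a misreading: the Hessenberg complexity $\varsigma$ depends only on the Hessenberg type $\Omega$ (it is the volume $\Delta_A(e_1)=a_{21}^2a_{32}$; the ``$a_{12}^2a_{23}$'' in the definition is a typo). So $\varsigma(M_t)$ is \emph{constant} along an NRS-ray, not affine in $t$. The theorem is not about a growing complexity staying minimal; it is the statement that for large $t$ the constant value $\varsigma(\Omega)=\Delta_{M_t}(e_1)$ is the global minimum of $\Delta_{M_t}$ on $\z^3\setminus\{0\}$, and is attained at a unique $\Xi(M_t)$-orbit.

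Second, and more substantively, the step ``expand the eigendata of $M_t$ and show that the minimizing lattice configuration stabilizes to the one attached to $e_1$'' hides the real difficulty and is where your plan differs from the paper. A priori the minimum of the cubic form $\Delta_{M_t}$ over the whole lattice could be achieved at integer points drifting to infinity with $t$, and neither Theorem~\ref{t2parab} nor a bare asymptotic of the eigendata rules this out. The paper's mechanism is geometric and much more specific: one first restricts attention to a fundamental domain of the Klein--Voronoi sail via Proposition~\ref{orbitsCF}, namely to the set $\Gamma^0_{M_t}(e_1)$ (the convex hull of the two orbit-ellipses through $e_1$ and $M_t e_1=(a_{11},a_{21},0)$). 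The key computation (Lemma~\ref{triangle}/Corollary~\ref{general}) is that these ellipses flatten as $t\to\infty$, so that $\Gamma^0_{M_t}(e_1)$ is eventually contained in an arbitrarily thin tube around the \emph{fixed} triangle with vertices $(1,0,0)$, $(a_{11},a_{21},0)$, $(-a_{11},-a_{21},0)$. Hence for large $t$ all candidate minimizers lie among the finitely many integer points of that triangle. On those fixed points $\Delta_{M_t}$ is computed explicitly and is linear, growing in $t$ except at $\pm e_1$ and $\pm(a_{11},a_{21},0)$, where it is constant; uniqueness follows because $e_1$ and $M_t e_1$ lie in the same $\Xi$-orbit. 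Theorem~\ref{t2parab} plays no direct role in this argument beyond justifying the two asymptotic directions. If you want your proposal to go through, this flattening-to-a-fixed-triangle step is the missing idea you need to supply.
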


\begin{figure}
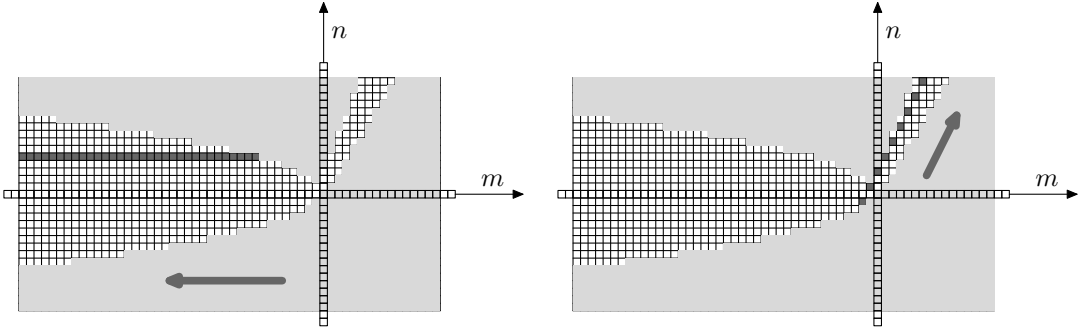

$$\epsfbox{result.1} \quad \epsfbox{result.2}$$
\caption{Any NRS-ray contains finitely many $\varsigma$-nonreduced
matrices.}\label{NRSfig}
\end{figure}

\begin{example}
Any NRS-ray for the Hessenberg type $\langle 0,1|0,0,1\rangle$
contains only $\varsigma$-reduced perfect matrices. Experiments
show that any NRS-ray for $\langle 0,1|1,0,2 \rangle$ contains at
most one $\varsigma$-nonreduced matrix (see in
Figure~\ref{family.2} on page~\pageref{family.2}).
\end{example}

\subsection{Examples of NRS-matrices for a given Hessenberg
type}\label{examplesHess} In this subsection we study several
examples of families $NRS(\Omega)$ for the Hessenberg types:
$$
\langle 0,1|0,0,1\rangle, \quad \langle 0,1|1,0,2\rangle, \quad
\langle 0,1|1,1,2\rangle, \quad \hbox{and} \quad \langle
1,2|1,1,3\rangle.
$$

In Figures~\ref{family.2}, \ref{family.3},
%, \ref{family.6},
and~\ref{family.4} the dark gray squares correspond to
$\varsigma$-nonreduced matrices. We also fill with gray the
squares corresponding to $\varsigma$-reduced Hessenberg matrices
that are $n$-th powers ($n\ge 2$) of some integer matrices.

\vspace{2mm}

{\noindent{\bf Hessenberg perfect NRS-matrices $H_{\langle
0,1|0,0,1\rangle}^{(1,0,0)}(m,n)$.} The Hessenberg complexity of
all these matrices is 1, and, therefore, they are all
$\varsigma$-reduced, see the family on Figure~\ref{family.1} on
page~\pageref{family.1}.}

\vspace{2mm}

{\noindent{\bf Hessenberg perfect NRS-matrices $H_{\langle
0,1|1,0,2\rangle}^{(1,0,0)}(m,n)$.} The Hessenberg complexity of
these matrices equals $2$. Experiments show that 12 of such
matrices are $\varsigma$-nonreduced, see the family in
Figure~\ref{family.2}. It is conjectured that all others
Hessenberg matrices of $NRS\big(\langle 0,1|1,0,2\rangle\big)$ are
$\varsigma$-reduced.}

\vspace{2mm}

\begin{figure}
$$\epsfbox{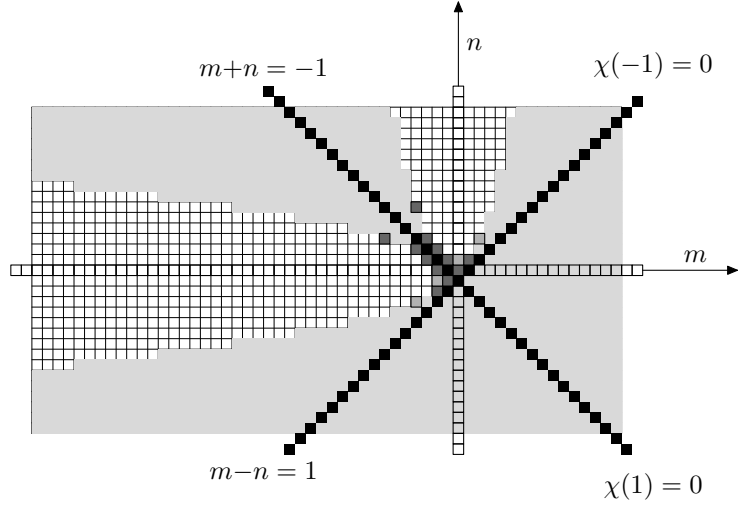}$$
\caption{The family of Hessenberg matrices $H_{\langle
0,1|1,0,2\rangle}^{(1,0,0)}(m,n)$.}\label{family.2}
\end{figure}

\vspace{2mm}

{\noindent{\bf Hessenberg perfect NRS-matrices $H_{\langle
0,1|1,1,2\rangle}^{(1,0,1)}(m,n)$.} The Hessenberg complexity of
these matrices equals $2$. We have found 12 $\varsigma$-nonreduced
matrices in the family. It is conjectured that all other
Hessenberg matrices of $NRS\big(\langle 0,1|1,1,2\rangle\big)$ are
$\varsigma$-reduced. See in Figure~\ref{family.3}.}

\begin{figure}
$$\epsfbox{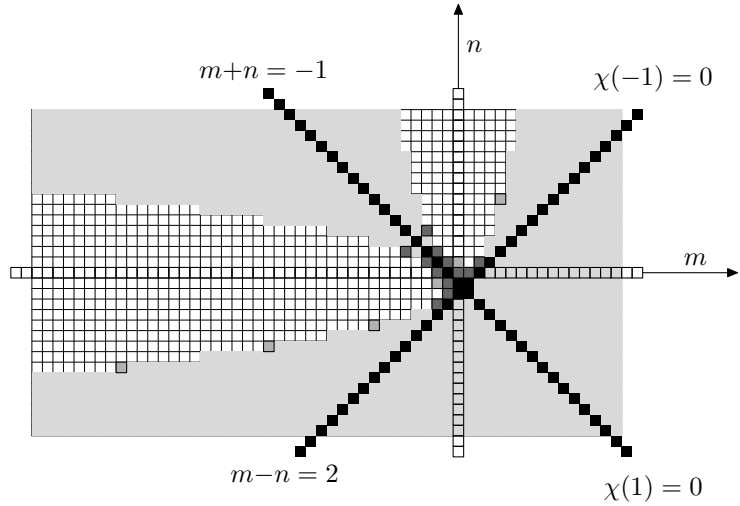}$$
\caption{The family of Hessenberg matrices $H_{\langle
0,1|1,1,2\rangle}^{(1,0,1)}(m,n)$.}\label{family.3}
\end{figure}

\vspace{2mm}

{\noindent{\bf Hessenberg perfect NRS-matrices $H_{\langle
1,2|1,1,3\rangle}^{(0,0,-1)}(m,n)$.} This is a more complicated
example of a family of Hessenberg perfect NRS-matrices, their
complexity equals $12$. We have found 27 $\varsigma$-nonreduced
matrices in the family. It is conjectured that all other
Hessenberg matrices of $NRS\big(\langle 1,2|1,1,3\rangle\big)$ are
$\varsigma$-reduced. See in Figure~\ref{family.4}.}

\begin{figure}
$$\epsfbox{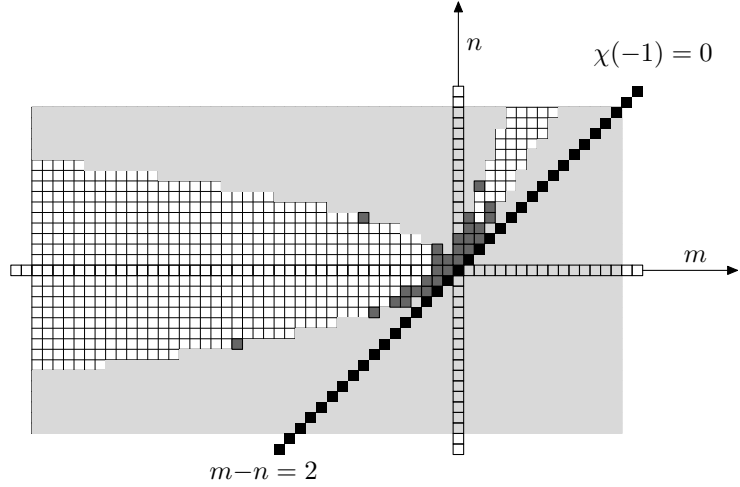}$$
\caption{The family of Hessenberg matrices $H_{\langle
1,2|1,1,3\rangle}^{(0,0,-1)}$.}\label{family.4}
\end{figure}

\section{Proof of Theorem~\ref{t2parab}}\label{parab}

We start the proof with several lemmas, but first let us give a
small remark.

\vspace{2mm}

{\noindent {\it Remark.} The set $NRS(\Omega)$ is defined by the
inequality
$$
\d_\Omega^{v}(m,n)<0.
$$
In the left part of the inequality there is a polynomial of degree
4 in variables $m$ and $n$. Note that the product
$16a_{21}^2a_{32}^2b_3\big(p_{1,\Omega}(m,n)p_{2,\Omega}(m,n)\big)$
is a good approximation to $\d_\Omega^{v}(m,n)$ at infinity: the
polynomial
$$
\d_\Omega^{v}(m,n)-16a_{21}^2a_{32}^2b_3\big(p_{1,\Omega}(m,n)p_{2,\Omega}(m,n)\big)
$$
is a polynomial of degree 2 in variables $m$ and $n$. }

\begin{lemma}\label{l2parab}
The curve $\d_{\langle 0,1|0,0,1\rangle}^{(1,0,0)}(m,n)=0$ is
contained in the domain defined by the inequalities:
$$
\left\{
\begin{array}{l}
(m^2-4n+3)(n^2+4m+3)\ge 0\\
(m^2-4n-3)(n^2+4m-3)-72\le 0\\
\end{array} \right.
$$
\end{lemma}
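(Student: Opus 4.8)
The plan is to reduce the whole statement to one explicit quartic polynomial and then compare it, term by term, against the two displayed products. First I would write out the matrix $H_{\langle 0,1|0,0,1\rangle}^{(1,0,0)}(m,n)$, whose only nonconstant entries sit in the last column, and compute its characteristic polynomial $-t^3+nt^2+mt+1$. Feeding this into the cubic discriminant formula yields
$$
\d_{\langle 0,1|0,0,1\rangle}^{(1,0,0)}(m,n)=m^2n^2+4m^3-4n^3-18mn-27,
$$
which I would immediately rewrite in the factored shape
$$
\d_{\langle 0,1|0,0,1\rangle}^{(1,0,0)}(m,n)=(m^2-4n)(n^2+4m)-2mn-27.
$$
This is the only genuine computation in the argument; the factored form is what makes the comparison with the products $(m^2-4n\pm 3)(n^2+4m\pm 3)$ transparent.

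The second step is to observe that each of the two expressions in the claimed domain differs from $\d$ by an honest quadratic polynomial. Writing $P=m^2-4n$ and $Q=n^2+4m$, expanding $(P\pm 3)(Q\pm 3)$ and subtracting $\d$, I would record the two identities
$$
(m^2-4n+3)(n^2+4m+3)=\d+F_1,\qquad F_1=3m^2+2mn+3n^2+12m-12n+36,
$$
$$
(m^2-4n-3)(n^2+4m-3)-72=\d-F_2,\qquad F_2=3m^2-2mn+3n^2+12m-12n+36.
$$
The heart of the proof is that both remainder forms are nonnegative, which I would establish by completing the square:
$$
F_1=3\Big(m+\tfrac{n+6}{3}\Big)^2+\tfrac{8}{3}(n-3)^2\ge 0,\qquad
F_2=3\Big(m+\tfrac{6-n}{3}\Big)^2+\tfrac{8}{3}\Big(n-\tfrac{3}{2}\Big)^2+18\ge 18.
$$

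With these identities in hand the lemma follows at once on the locus $\d=0$. There the first identity reads $(m^2-4n+3)(n^2+4m+3)=F_1\ge 0$, giving the first inequality; this bound is in fact sharp, with equality exactly at the point $(m,n)=(-3,3)$, which one checks does lie on the curve $\d=0$. The second identity reads $(m^2-4n-3)(n^2+4m-3)-72=-F_2\le -18<0$, giving the second inequality with room to spare. I expect the only delicate points to be getting the discriminant and the two expansions exactly right; once the shift constants $\pm 3$ and $-72$ are pinned down, the positivity of $F_1$ and $F_2$ is forced, and the completion of squares is the short decisive step, so there is no substantial obstacle beyond careful bookkeeping.
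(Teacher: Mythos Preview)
Your proof is correct and follows essentially the same route as the paper: compute $\d=(m^2-4n)(n^2+4m)-2mn-27$, subtract each of the two displayed products, and show the remainder is a sign-definite quadratic form. The only cosmetic difference is in the sum-of-squares decomposition: the paper writes the remainders as $\pm\bigl(2(n-3)^2+2(m+3)^2+(n\pm m)^2\bigr)$, whereas you complete the square in $m$ first; both verify the same positive-definiteness.
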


{\noindent {\it Remark.} Lemma~\ref{l2parab} implies that the
curve $\d_{\langle 0,1|0,0,1\rangle}^{(1,0,0)}(m,n)=0$ is
contained in some tubular neighborhood of the curve
$$
(m^2-4n)(n^2+4m)=0.
$$
}

\begin{proof}
Note that
$$
\d_{\langle
0,1|0,0,1\rangle}^{(1,0,0)}(m,n)=(m^2-4n)(n^2+4m)-2mn-27.
$$
Thus, we have
$$
\d_{\langle
0,1|0,0,1\rangle}^{(1,0,0)}(m,n)-(m^2-4n+3)(n^2+4m+3)=-2(n-3)^2-2(m+3)^2-(n+m)^2\le
0,
$$
and
$$
\d_{\langle
0,1|0,0,1\rangle}^{(1,0,0)}(m,n)-(m^2-4n-3)(n^2+4m-3)+72=2(n-3)^2+2(m+3)^2+(n-m)^2\ge
0.
$$
Therefore, the curve $\d_{\langle
0,1|0,0,1\rangle}^{(1,0,0)}(m,n)=0$ is contained in the domain
defined in the lemma.
\end{proof}

\begin{lemma}\label{l2parab2}
For any $\Omega=\langle a_{11},a_{21}|a_{12},a_{22},a_{32}
\rangle$ there exists an affine $($not necessarily integer$)$
transformation of the plane $OMN$ taking the curve
$\d_\Omega^{v}(m,n)=0$ to the curve $\d_{\langle
0,1|0,0,1\rangle}^{(1,0,0)}(m,n)=0$.
\end{lemma}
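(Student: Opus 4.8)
The plan is to observe that $\d_\Omega^v(m,n)$ is, for every $\Omega$, one and the same universal polynomial evaluated on the coefficients of the characteristic polynomial, and that these coefficients depend \emph{affinely} on $(m,n)$. Write $F(b_1,b_2,b_3)$ for the discriminant of the cubic $-t^3+b_1t^2-b_2t+b_3$, so that by definition $\d_\Omega^v(m,n)=F\big(b_1(m,n),b_2(m,n),b_3(m,n)\big)$, where now $b_1,b_2,b_3$ denote the coefficients of the characteristic polynomial of the matrix $H_\Omega^{v}(m,n)$ itself. The first step is to record how these three coefficients vary with the parameters $(m,n)$.

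Since $H_\Omega^{v}(m,n)$ differs from $H_\Omega^{v}(0,0)$ only by adding $m$ times the first column and $n$ times the second column to the third column, its determinant is unchanged, so $b_3\equiv 1$ is constant. Expanding the trace gives $b_1=a_{32}n+\mathrm{const}$, and expanding the sum of the principal $2\times 2$ minors gives $b_2=-a_{21}a_{32}\,m+a_{11}a_{32}\,n+\mathrm{const}$; the would-be quadratic terms drop out because the only $(m,n)$-dependent entries sit in the third column, which is multiplied by the zero entry $a_{31}=0$ in one minor and enters only linearly in the other. Hence the map $L_\Omega\colon(m,n)\mapsto(b_1,b_2)$ is affine, with linear part of determinant $a_{21}a_{32}^2$. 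For the Hessenberg types under consideration one has $a_{21}\ge 1$ and $a_{32}\ge 1$, so $a_{21}a_{32}^2\neq 0$ and $L_\Omega$ is an affine isomorphism of the plane $OMN$ onto the $(b_1,b_2)$-plane.

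Now the computation already carried out for the standard type yields the characteristic polynomial $-t^3+nt^2+mt+1$, so the corresponding map is $L_0\colon(m,n)\mapsto(n,-m)$, again an affine isomorphism, and $\d_{\langle 0,1|0,0,1\rangle}^{(1,0,0)}(m,n)=F(n,-m,1)$ (one checks this equals $(m^2-4n)(n^2+4m)-2mn-27$, consistent with Lemma~\ref{l2parab}). Writing $C=\{(b_1,b_2)\mid F(b_1,b_2,1)=0\}$ for the common curve in the $(b_1,b_2)$-plane, we obtain $\{\d_\Omega^{v}=0\}=L_\Omega^{-1}(C)$ and $\{\d_{\langle 0,1|0,0,1\rangle}^{(1,0,0)}=0\}=L_0^{-1}(C)$. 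Therefore the affine transformation $\Phi=L_0^{-1}\circ L_\Omega$ of the plane $OMN$ carries the first curve bijectively onto the second, which is exactly the claim.

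The only nonroutine point is the verification that $b_1$ and $b_2$ are genuinely affine of degree at most one in $(m,n)$, and that the resulting linear map is nonsingular; both reduce to the minor and determinant computations indicated above, so I do not expect any serious obstacle. The conceptual content is simply that the discriminant factors through the affine coordinate change $(m,n)\mapsto(b_1,b_2)$, and that this change is invertible precisely because $a_{21}a_{32}\neq 0$ for perfect Hessenberg types.
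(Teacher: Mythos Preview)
Your argument is correct. The paper takes a slightly different route: rather than working directly with the invariants $b_1,b_2$, it exhibits an explicit matrix $X_\Omega^{v}\in GL(3,\q)$ conjugating $H_\Omega^{v}(m,n)$ to $H_{\langle 0,1|0,0,1\rangle}^{(1,0,0)}(m',n')$ for an explicit affine function $(m',n')$ of $(m,n)$, and then uses that conjugate matrices have equal characteristic polynomials (hence equal discriminants). Your approach is more direct for the stated goal, since it bypasses writing down and verifying the conjugating matrix and goes straight to the observation that $\d$ factors through the affine map $(m,n)\mapsto(b_1,b_2)$; the paper's version records the stronger fact that the two families are actually rationally conjugate, in the spirit of the companion--matrix normal form and of the explicit conjugations used again in Section~\ref{structure}. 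The two arguments produce the same affine transformation, as one checks by comparing your $L_0^{-1}\circ L_\Omega$ with the formula displayed at the end of the paper's proof.
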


\begin{proof}
Let $H_\Omega^{v}(0,0)=(a_{i,j})$. Note that a matrix
$H_\Omega^{v}(m,n)$ is rational conjugate to the matrix
$$
H_{\langle 0,1|0,0,1\rangle}^{(1,0,0)}(a_{23}a_{32}-a_{11}a_{33}+
a_{12}a_{21}-a_{22}a_{33}-a_{11}a_{22}+a_{21}a_{32}m-a_{11}a_{32}n,
a_{11}+a_{22}+a_{33}+a_{32}n)
$$
by the matrix
$$
X_\Omega^{v}= \left(
\begin{array}{ccc}
1       &a_{11} &a_{11}^2+a_{12}a_{21} \\
0       &a_{21} &a_{11}a_{21}+a_{21}a_{22}\\
0       &0       &           a_{21}a_{32}\\
\end{array}
\right).
$$
Therefore, the curve $\d_\Omega^{v}(m,n)=0$ is mapped to the curve
$\d_{\langle 0,1|0,0,1\rangle}^{(1,0,0)}(m,n)=0$ bijectively. In
$OMN$ coordinates this map corresponds to the following affine
transformation
$$
\left(
\begin{array}{c}
m\\
n
\end{array}
\right) \mapsto
\left(
\begin{array}{c}
a_{21}a_{32}m-a_{11}a_{32}n\\
a_{32}n
\end{array}
\right) +
\left(
\begin{array}{c}
a_{23}a_{32}-a_{11}a_{33}+a_{12}a_{21}-a_{22}a_{33}-a_{11}a_{22}\\
a_{11}+a_{22}+a_{33}
\end{array}
\right).
$$

This completes the proof of the lemma.
\end{proof}

\vspace{1mm}

{\it Proof of Theorem~\ref{t2parab}.} Consider a family of
matrices $H_\Omega^{v}(-p_{1,\Omega}(0,t)+\varepsilon,t)$ with
real parameter $t$. Direct calculations show that for $\varepsilon
\ne 0$ the discriminant of the matrices for this family is a
polynomial of the forth degree in variable $t$, and
$$
\d_\Omega^{v}(-p_{1,\Omega}(0,t)+\varepsilon,t)=\frac{1}{4}a_{21}a_{32}^5\varepsilon
t^4+O(t^3).
$$
Therefore, there exists a neighborhood of infinity with respect to
the variable $t$ such that the function
$\d_\Omega^{v}(-p_{1,\Omega}(0,t)+\varepsilon,t)$ is positive for
positive $\varepsilon$ in this neighborhood, and negative for
negative $\varepsilon$.

Hence for a given $\varepsilon$ there exists a sufficiently large
$N_1=N_1(\varepsilon)$ such that for any $t>N_1$ there exists a
solution of the equation $ \d_\Omega^{v}(m,n)=0 $ at the segment
with endpoints
$$
\big(-p_{1,\Omega}(0,t)+\varepsilon,t\big) \quad \hbox{and} \quad
\big(-p_{1,\Omega}(0,t)-\varepsilon,t \big)
$$
of the plane $OMN$.

\vspace{2mm}

Now we examine the family in variable $t$ for the second parabola:
$$
\begin{array}{l}
H_\Omega^{v}\left(t{-}a_{11}p_{2,\Omega}(t,0){-}\frac{a_{11}}{\sqrt{a_{11}^2+a_{21}^2}}\varepsilon,
-a_{21}p_{2,\Omega}(t,0){-}\frac{a_{21}}{\sqrt{a_{11}^2+a_{21}^2}}\varepsilon\right).
\end{array}
$$

By the same reasons, for a given $\varepsilon$ there exists a
sufficiently large $N_2=N_2(\varepsilon)$ such that for any
$t>N_2$ there exists a solution of the equation $
\d^{v}_\Omega(m,n)=0 $ at the segment with endpoints
$$
\begin{array}{l}
\big(t{-}a_{11}p_{2,\Omega}(t,0){-}\frac{a_{11}}{\sqrt{a_{11}^2+a_{21}^2}}\varepsilon,
-a_{21}p_{2,\Omega}(t,0){-}\frac{a_{21}}{\sqrt{a_{11}^2+a_{21}^2}}\varepsilon\big)
\quad\hbox{ and }\\
\big(t{-}a_{,1}p_{2,\Omega}(t,0){+}\frac{a_{11}}{\sqrt{a_{11}^2+a_{21}^2}}
\varepsilon,-a_{21}p_{2,\Omega}(t,0){+}\frac{a_{21}}{\sqrt{a_{11}^2+a_{21}^2}}\varepsilon\big)
\end{array}
$$
of the plane $OMN$.

We have shown that for any of the four branches two parabolas
defined by $p_{1,\Omega}(m,n)=0$ and $p_{2,\Omega}(m,n)=0$ there
exists (at least) one branch of $\d^{v}_\Omega(m,n)=0$ contained
in the $\varepsilon$-tube of the chosen parabolic branch if we are
far enough from the origin.

\vspace{1mm}

From Lemma~\ref{l2parab} we know that $\d_{\langle
0,1|0,0,1\rangle}^{(1,0,0)}(m,n)=0$ is contained in some tubular
neighborhood of
$$
p_{1,\langle 0,1|0,0,1\rangle}(m,n)p_{2,\langle
0,1|0,0,1\rangle}(m,n)=0.
$$
Then by Lemma~\ref{l2parab2} the curve $\d_\Omega^{v}(m,n)=0$ is
contained in  some tubular neighborhood of the curve
$$
p_{1,\Omega}(m,n)p_{2,\Omega}(m,n)=0
$$
outside some ball centered at the origin. Finally, by Viet
Theorem, the intersection of the curve $\d_\Omega^{v}(m,n)=0$ with
each of the parallel lines
$$
\ell_t: \quad \frac{a_{11}+a_{21}}{a_{21}}n-m=t
$$
contains at most 4 points. Therefore, there exists sufficiently
large $T$ such that for any $t\ge T$ the intersection of the curve
$\d_\Omega^{v}(m,n)=0$ and $\ell_t$ contains exactly 4 points
corresponding to the branches of the parabolas
$p_{1,\Omega}(m,n)=0$ and $p_{2,\Omega}(m,n)=0$ lying in
$\Lambda_{-\varepsilon}\setminus\Lambda_{\varepsilon}$.

Hence, there exists $R=R(\varepsilon,N_1,N_2,T)$ such that in the
complement to the ball $B_R(O)$ we have
$$
\Lambda_\varepsilon
\subset NRS(\Omega) \subset
\Lambda_{-\varepsilon}.
$$
The proof of Theorem~\ref{t2parab} is completed. \qed

\section{Supplementary tools for the proof of Theorem~\ref{maintheorem}}\label{kvcf}

In this section we introduce several notions that we use in the
proof of Theorem~\ref{maintheorem}. In Subsection~\ref{md} we
introduce Markoff-Davenport characteristic that represents the
Hessenberg complexity. Further in Subsection~\ref{toHess} we show
how to construct perfect Hessenberg matrices $(M|v)$ conjugate to
a given one. Finally in Subsection~\ref{kv} we give the definition
of Klein-Voronoi continued fractions, formulate a theorem on
construction of $\varsigma$-reduced operators via vertices of a
fundamental domain of the corresponding Klein-Voronoi continued
fraction, and prove one supplementary statement on geometry of
continued fractions.

\subsection{MD-characteristics}\label{md} The study of the Markoff-Davenport
characteristics is closely related to the theory of minima of
absolute values of homogeneous forms with integer coefficients in
$n$-variables of degree~$n$. One of the first works in this area
was written by A.~Mar\-koff~\cite{Markoff1879} for the
decomposable forms (into the product of real linear forms) for
$n=2$. Further, H.~Davenport in series of
works~\cite{Davenport1938},~\cite{Davenport1938a},~\cite{Davenport1939},
\cite{Davenport1941}, and~\cite{Davenport1943} made first steps
for the case of decomposable forms for $n=3$.

\vspace{2mm}

Consider $A\in SL(n,\z)$. Denote by $P(A,v)$ the parallelepiped
spanned by vectors $v$, $A(v)$, $\ldots$, $A^{n-1}(v)$, i.e.,
$$
P(A,v)=\bigg\{O+\sum\limits_{i=0}^{n-1}\lambda_iA^{i}(v)\bigg|0\le
\lambda_i \le 1, i=0,\ldots, n{-}1\bigg\},
$$
where $O$ is the origin.
\begin{definition}
The {\it Markoff-Davenport characteristic} (or {\it
MD-characteristic}, for short) of an $SL(n,\z)$-operator $A$ is a
functional:
$$
\Delta_A:\r^n\to \r \qquad \hbox{defined by}
\qquad\Delta_A(v)=V(P(A,v)),
$$
where $V(P(A,v))$ is the nonoriented volume of $P(A,v)$.
\end{definition}

\begin{remark}\label{DavHes0}
Consider an operator $A$ with Hessenberg matrix $M$ in some
integer basis. Then the Hessenberg complexity $\varsigma(M)$
equals the value of MD-characteristic $\Delta_A(1,0,0)$.
\end{remark}

We continue with the following general definition.

\begin{definition}
The group of all $GL(3,\z)$-operators commuting with $A$ is called
the {\it Dirichlet group} and denoted by $\Xi(A)$.
\end{definition}

%\begin{example}
%Consider an operator $A_0$ with matrix
%$$
%\left(
%\begin{array}{cccc}
%0&0&0&1\\
%1&0&0&-4\\
%0&1&0&1\\
%0&0&1&4\\
%\end{array}
%\right)
%$$
%The Dirichlet group $\Xi(A_0)$ is isomorphic to $\z^3$  and it is
%generated by the operators $A_0^2$, $(Id{-}A_0)^2$, and
%$A_0^3{+}A_0$.
%\end{example}

For MD-characteristic we have the following invariance property.

\begin{proposition}
Consider $A\in SL(n,\z)$ and let $B\in \Xi(A)$. Then for an
arbitrary $v$ we have
$$
\Delta_A(v)=\Delta_A(B(v)).
$$
\end{proposition}

Basically, this means that the MD-characteristic naturally defines
a function over the set of all orbits of the Dirichlet group.

\subsection{Construction of a perfect Hessenberg matrix $(M|v)$ conjugate to a given
one}\label{toHess}

Let us show how to construct perfect Hessenberg matrices integer
conjugate to a given one.

\vspace{2mm}

{\bf Algorithm to construct perfect Hessenberg matrices}.

 {\it Input Data.} We are given by an $SL(3,\z)$-matrix $M$ of an
operator $A$ with irreducible characteristic polynomial over $\q$
and an integer primitive $($i.e., with relatively prime
coordinates$)$ vector $v$.

{\it Step 1.} We put $e_1=v$.

{\it Step 2.} Choose an integer primitive vector of the plane
spanned by $v$ and $A(v)$ on the minimal possible nonzero
Euclidean distance from the line spanned by $v$, denote it by
$g_2$. Find the coordinates $q_{11}$ and $a_{21}$ from the vector
decomposition
$$
A(e_1)=q_{11} e_1+a_{21}g_{2}.
$$
Find $b_{11}$ and $a_{11}$ as integer quotients and reminders:
$$
q_{11}=|a_{21}|b_{11}+a_{11}.
$$
Define
$$
e_{2}=\sign(a_{21})g_{2}+b_{11} e_1.
$$

{\it Step 3.} Choose an integer primitive vector $g_3\in\r^3$ on
minimal possible nonzero Euclidean distance from the plane spanned
by $e_1$ and $e_2$. Find the coordinates $q_{12}$, $q_{22}$, and
$a_{32}$ from the vector decomposition
$$
A(e_2)=q_{12} e_1+q_{22} e_2+a_{32}g_{3}.
$$
Find $b_{12}$, $b_{22}$, $a_{12}$, and $a_{22}$ as integer
quotients and reminders:
$$
q_{12}=|a_{32}|b_{12}+a_{12} \quad \hbox{and} \quad
q_{22}=|a_{32}|b_{22}+a_{12}.
$$
Then we have
$$
e_{3}=b_{12} e_1+b_{22} e_2+\sign(a_{32})g_{3}.
$$

{\it Output Data.} Let $C$ be a transition matrix to the basis
$\{e_1,e_2,e_3\}$. In the output we have the perfect Hessenberg
matrix $CMC^{-1}$.

\begin{definition}
Consider an $SL(3,\z)$-matrix $M$ with irreducible characteristic
polynomial over $\q$ and an integer primitive vector $v$. Starting
from $M$ and $v$ the above algorithm generates a perfect
Hessenberg matrix, we denote it by $(M|v)$
\end{definition}

\begin{remark}
In~\cite{Karpenkov2012} we showed that any perfect Hessenberg
matrix integer conjugate to $M$ is represented as $(M|v)$ for a
certain integer primitive vector $v$.
\end{remark}

%---------------------
%
%\begin{corollary}\label{orbitinvariance}
%Consider an $SL(n,\z)$-operators $A$ with matrix $M$. Let $B$ be
%an arbitrary $GL(n,\z)$-operator commuting with $A$. Then for an
%arbitrary $v$ we have
%$$
%(M|v)=(M|B(v)).
%$$
%\end{corollary}
%
%
%

\subsection{Klein-Voronoi continued fractions}\label{kv}

In the proof of Theorem~\ref{maintheorem} we essentially use the
geometric construction of Klein-Voronoi continued fractions.
In~\cite{Klein1895} and~\cite{Klein1896} F.~Klein proposed a
multidimensional generalization of continued fractions to totally
real case. First attempts to find analogous construction in other
cases were made by G.~Voronoi in his
dissertation~\cite{Voronoui1952}. In 1985 J.~A.~Buchmann in his
papers~\cite{Buchmann1985} and~\cite{Buchmann1985a} proposed to
use Voronoi's extension to compute of fundamental units in orders.
We use a slightly modified definition of Klein-Voronoi continued
fraction from the paper~\cite{Karpenkov2012}.

\subsubsection{RS-case}

Let us first briefly recall Klein's definition of two-dimensional
continued fraction in totally real case. Consider an operator $A$
in $GL(3,\z)$ with three real distinct eigenvalues. This operator
has three distinct invariant planes passing through the origin.
The complement to the union of these planes consists of $8$ open
orthants. Let us choose an arbitrary orthant.

\begin{definition}
The boundary of the convex hull of all integer points except the
origin in the closure of the orthant is called the {\it sail}. The
set of all $8$ sails of the space $\r^{3}$ is called the {\it
$2$-dimensional continued fraction in the sense of Klein}.
\end{definition}

For further information on Klein continued fractions we refer to
the following literature:~\cite{Lachaud1993}, \cite{Arnold2002},
\cite{Arnold1998}, \cite{Arnold1999}, \cite{Kontsevich1999}
\cite{Korkina1996}, \cite{Korkina1995}, \cite{Lachaud2002},
\cite{Karpenkov2004}, \cite{Karpenkov2004a},
\cite{Karpenkov2009b}, \cite{Moussafir2000a} etc.

\subsubsection{NRS-case}\label{cf_1}

Consider an operator $A$ in $GL(3,\r)$ with distinct eigenvalues.
Suppose that it has a real eigenvalue $r$ and complex conjugate
eigenvalues $c$ and $\bar c$.

Denote by $T^1(A)$ the set of all real operators commuting with
$A$ such that they have a real eigenvalue equals $1$ are with
absolute value of both complex eigenvalues equal one. Actually,
$T^l(A)$ is an abelian group with operation of matrix
multiplication isomorphic to $S^1$.

For $v\in\r^3$ we denote
$$
T_A(v)=\{B(v)\mid B\in T^1(A)\}.
$$
If $v$ is a real eigenvector then $T_A(v)$ consists of one point.
Otherwise (in general case) $T_A(v)$ is homeomorphic to $S^1$.

Let $g_1$ be a real eigenvector with eigenvalue $r$; $g_2$ and
$g_3$ be vectors corresponding to the real and imaginary parts of
some complex eigenvector with eigenvalue $c$. Consider the
coordinate system $OXYZ$ corresponding to the basis $\{g_i\}$.
Denote by $\pi$ the $(k{+}l)$-dimensional plane $OXY$, and by
$\pi_+$ --- the half-plane of $\pi$ defined by $y\ge 0$.

\begin{proposition}
For any $v$ the orbit $T_A(v)$ intersects the half-plane $\pi_+$
in a unique point. \qed
\end{proposition}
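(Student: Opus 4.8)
The plan is to make the $S^1$-action of $T^1(A)$ completely explicit in the basis $\{g_1,g_2,g_3\}$ and then reduce the statement to elementary planar geometry. First I would record the matrix of $A$ in this basis. Since $g_1$ is a real eigenvector for $r$ while $g_2$ and $g_3$ are the real and imaginary parts of a complex eigenvector for $c$, one has $A(g_1)=rg_1$, $A(g_2)=(\mathrm{Re}\,c)g_2-(\mathrm{Im}\,c)g_3$, and $A(g_3)=(\mathrm{Im}\,c)g_2+(\mathrm{Re}\,c)g_3$; thus $A$ is block-diagonal, with a $1{\times}1$ block $r$ on $\r g_1$ and a $2{\times}2$ rotation-scaling block on $\sspan(g_2,g_3)$.

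Next I would pin down the shape of an arbitrary $B\in T^1(A)$. As $A$ has three distinct eigenvalues it is diagonalizable over $\c$, and its commutant consists exactly of the operators diagonal in the eigenbasis; hence every $B$ commuting with $A$ preserves both the real eigenline $\r g_1$ and the real invariant plane $\sspan(g_2,g_3)$. The defining constraints of $T^1(A)$, namely real eigenvalue $1$ and complex eigenvalues of modulus $1$, then force $B$ to act as the identity on $g_1$ and as an orthogonal rotation $\bigl(\begin{smallmatrix}\cos\phi & \sin\phi\\ -\sin\phi & \cos\phi\end{smallmatrix}\bigr)$ on the $(y,z)$-coordinates, which recovers the stated isomorphism with $S^1$.

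With this description the orbit is transparent. For $v$ of coordinates $(x_0,y_0,z_0)$ we get $B(v)=(x_0,\,y_0\cos\phi+z_0\sin\phi,\,-y_0\sin\phi+z_0\cos\phi)$, so $T_A(v)$ lies in the plane $x=x_0$ and the rotation preserves $y^2+z^2=y_0^2+z_0^2=:\rho^2$. Intersecting with $\pi_+=\{z=0,\,y\ge 0\}$ amounts to solving $-y_0\sin\phi+z_0\cos\phi=0$; its two antipodal solutions $\phi$ and $\phi+\pi$ give $y$-coordinates $+\rho$ and $-\rho$, exactly one of which satisfies $y\ge 0$ when $\rho>0$. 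When $\rho=0$, that is $v\in\r g_1$, the orbit degenerates to the single point $v$, which lies in $\pi_+$ since then $y=z=0$. In every case the intersection is a single point, as claimed.

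The argument is essentially a direct computation, so I do not expect a genuine obstacle. The only step requiring a little care is the justification that $T^1(A)$ consists precisely of the block-diagonal rotations above, i.e.\ that the distinct-eigenvalue hypothesis together with the modulus-one constraints excludes any other commuting operators; this follows cleanly from simultaneous diagonalizability over $\c$.
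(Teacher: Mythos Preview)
Your argument is correct: once one writes $T^1(A)$ explicitly in the $\{g_1,g_2,g_3\}$ basis as the circle of rotations in the $(y,z)$-plane fixing $g_1$, the orbit $T_A(v)$ is the circle $\{x=x_0,\ y^2+z^2=\rho^2\}$ and its intersection with $\{z=0,\ y\ge 0\}$ is visibly the single point $(x_0,\rho,0)$, with the degenerate case $\rho=0$ handled as you say. The paper gives no proof of this proposition at all (it is stated with an immediate \qed), so your computation is exactly the routine verification the author left to the reader, and your care in justifying that the commutant together with the modulus constraints forces $B$ to be a pure rotation on $\sspan(g_2,g_3)$ is the only nontrivial point.
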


\begin{definition}
A point $p\in \pi_+$ is said to be {\it $\pi$-integer} if the
orbit $T_A(p)$ contains at least one integer point.
\end{definition}

The invariant hyperplane $x=0$ of operator $A$ divides $\pi_+$
into two arcwise connected components.

\begin{definition}
The convex hull of all $\pi$-integer points except the origin
contained in a given arcwise connected component is called a {\it
factor-sail} of the operator $A$. The set of both factor-sails is
said to be the {\it factor-continued fraction} for the operator
$A$.
\\
The union of all orbits $T_A(*)$ in $\r^n$ represented by the
points in the factor-sail is called the {\it sail} of the operator
$A$. The set of all sails is said to be the  {\it continued
fraction} for the operator $A$ in the sense of Klein-Voronoi (see
in Figure~\ref{complex.1} below).
\end{definition}

It is clear that the factor-sail is a broken line. The
corresponding sail is the surface of elliptic rotation of the
factor-sail around the eigenline of $A$. The cones corresponding
to rotation of edges (vertices) are called {\it factor-edges}
({\it factor-vertices}).

\subsubsection{Algebraic continued fractions}\label{cf_3}

Consider an operator $A$ in $GL(3,\z)$ with a real eigenvalue $r$
and two complex conjugate distinct eigenvalues $c$ and $\bar c$.
Suppose also that the characteristic polynomial of $A$ is
irreducible over $\q$.

The Dirichlet group $\Xi(A)$ (of $GL(3,\z)$-operators commuting
with $A$) takes the Klein-Voronoi continued fraction to itself but
maybe exchange the sails. By Dirichlet unit theorem (see
in~\cite{Borevich1966}) the Dirichlet group $\Xi (A)$ is always
homomorphic to $\z\oplus\z/2\z$.

\begin{definition}\label{defFD}
A {\it fundamental domain of the Klein-Voronoi continued fraction}
for $A$ is a collection of open orbit-vertices and orbit-edges
such that for any orbit-face $F$ of the continued fraction there
exists a unique orbit-face $F'$ in this collection and an operator
$T\in \Xi(A)$ such that $F=T(F')$.
\end{definition}

\begin{example}
Consider an operator
$$
A= \left(
\begin{array}{ccc}
0 &0 &1\\
1 &0 &1\\
0 &1 &3\\
\end{array}
\right).
$$
It has one real and two complex conjugate eigenvalues. In
Figure~\ref{complex.1}a we show in light gray the halfplane
$\pi_+$, the invariant plane for $A$ corresponding to complex
conjugate eigenvalues is colored in dark gray. The boundary line
of the halfplane $\pi_+$ is an invariant line of $A$, it contains
real eigenvectors of $A$.

The halfplane $\pi_+$ is shown in Figure~\ref{complex.1}b. The
invariant plane intersects $\pi_+$ in a ray separating $\pi_+$
into two connected components. A point of $\pi_+$ is colored in
black if and only if it is a $\pi$-integer point. The boundaries
of the convex hulls in each part of $\pi_+$ are two factor-sails.
Notice that, one factor-sail is taken to another by the induced
action of the operator $-Id$, where $Id$ is the identity operator.

In Figure~\ref{complex.1}c we show one of the sails of
Klein-Voronoi continued fraction for $A$. There are three visible
orbit-vertices, they correspond to integer vectors $(1,0,0)$,
$(0,1,0)$, and $(0,0,1)$: the large dark points $(0,1,0)$ and
$(0,0,1)$ are visible on the corresponding orbit-vertices, the
point $(1,0,0)$ is on the backside of the continued fraction.

A fundamental domain of the operator consists of one orbit-vertex
and one orbit edge. For instance, one can take the orbit-vertex
corresponding to the point $(1,0,0)$ and the orbit-edge
corresponding to the ''tube'' connecting orbit-vertices for the
points $(1,0,0)$ and $(0,1,0)$.

\begin{figure}
$$\epsfbox{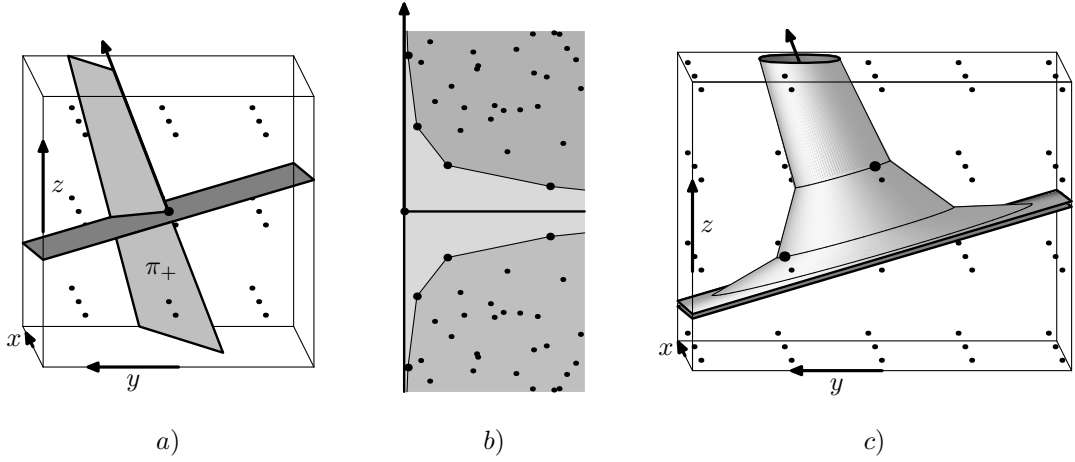}$$
\caption{A Klein-Voronoi continued fraction: a) the cone $\pi_+$
and the eigenplane; b) the continued factor-fraction; c) one of
the sails.}\label{complex.1}
\end{figure}
\end{example}

In the proof of Theorem~\ref{maintheorem} we use the following
result on construction of $\varsigma$-reduced operators via
vertices of fundamental domains of Klein-Voronoi continued
fractions.

\begin{theorem}\label{cfr}{\bf(\cite{Karpenkov2012})}
Consider an $SL(n,\z)$=operator $A$ with matrix $M$ having
distinct eigenvalues. Let $U$ be a fundamental domain of the
Klein-Voronoi continued fractions for $A$. Then we have:

{\it $($i$)$} For any $\varsigma$-reduced matrix $\hat M$ integer
conjugate to $M$ there exists $v\in U$ such that $ \hat M=(M|v)$.

{\it $($ii$)$} Let $v\in U$. The matrix $(M|v)$ is
$\varsigma$-reduced if and only if the MD-characteristic
$\Delta_A(v)$ attains its minimal value. \qed
\end{theorem}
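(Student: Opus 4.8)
The plan is to funnel both parts through the single identity
$$
\varsigma\big((M|v)\big)=\Delta_A(v),
$$
valid for every primitive integer vector $v$. This is Remark~\ref{DavHes0} read for the matrix $(M|v)$: the algorithm of Subsection~\ref{toHess} outputs an integer basis $\{e_1,e_2,e_3\}$ with $e_1=v$, and by the observation following the definition of Hessenberg complexity, $\varsigma\big((M|v)\big)$ is the lattice volume of the parallelepiped on $(1,0,0)$, $(M|v)(1,0,0)$, $(M|v)^2(1,0,0)$ in the new basis. Transporting this parallelepiped by the unimodular transition matrix identifies it with $P(A,v)$, whose volume is $\Delta_A(v)$. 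First I would record this, together with the fact (from the Remark after the algorithm) that every perfect Hessenberg matrix integer conjugate to $M$ equals $(M|w)$ for some primitive integer $w$.

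Next I would compute $\Delta_A$ in the eigenbasis $\{g_1,f,\bar f\}$, where $Ag_1=rg_1$ and $Af=cf$. Writing $v=xg_1+\zeta f+\bar\zeta\bar f$ and expanding the Vandermonde determinant of $v,A(v),A^2(v)$ gives
$$
\Delta_A(v)=\kappa\,|x|\,|\zeta|^2,
$$
with $\kappa>0$ the fixed factor converting eigenbasis volume to lattice volume. Thus $\Delta_A$ depends only on $|x|$ and $|\zeta|$, which are exactly the quantities fixed by $T^1(A)\cong S^1$; hence $\Delta_A$ is constant on each orbit $T_A(v)$ and descends to a function on $\pi_+$. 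I would also verify here the invariance $(M|Tv)=(M|v)$ for $T\in\Xi(A)$: although the algorithm singles out $g_2,g_3$ by Euclidean distance, the resulting perfect matrix is insensitive to that choice, since replacing $g_2$ by $\pm g_2+kv$ leaves $e_2$ unchanged after the perfectness normalization. Hence $(M|v)$ is a purely lattice-theoretic invariant of $v$, and any $T\in\Xi(A)\subset GL(3,\z)$ commuting with $A$ and preserving the lattice carries the canonical basis for $v$ to that for $Tv$.

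With these in hand, part (ii) is immediate: by the displayed identity and the representability remark, the least Hessenberg complexity in the conjugacy class equals $\min_w\Delta_A(w)$ over primitive integer $w$ (equivalently over $\pi$-integer points, by orbit invariance), so for $v\in U$ the matrix $(M|v)$ is $\varsigma$-reduced exactly when $\Delta_A(v)$ realizes this minimum. For part (i), take a $\varsigma$-reduced $\hat M$ and write $\hat M=(M|w)$; then $\Delta_A(w)$ is minimal, so $w$ lies on a vertex of the Klein-Voronoi continued fraction. Its orbit-vertex is an orbit-face, so by the defining property of the fundamental domain there is $T\in\Xi(A)$ with $T^{-1}\big(T_A(w)\big)\subset U$. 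Then $v:=T^{-1}w$ is an integer point of $U$, and applying the invariance to $T^{-1}\in\Xi(A)$ gives $(M|v)=(M|w)=\hat M$, as required.

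The hard part is the geometric lemma used in (i): that a global minimizer of the form $|x|\,|\zeta|^2$ among $\pi$-integer points is attained on a vertex of the factor-sail. Since the factor-sail is the boundary of the convex hull of the $\pi$-integer points in a component of $\pi_+$, every such point lies on or behind the sail; I would argue that the level curves $|x|\,|\zeta|^2=\mathrm{const}$ meet each supporting line of the sail so that points behind a given edge carry strictly larger values of the form, forcing the minimum onto the sail and in fact onto a vertex. Controlling this comparison, namely the interplay between the non-convex level sets of $|x|\,|\zeta|^2$ and the supporting lines of the factor-sail, is where the real work lies; the remaining ingredients (the bridge identity, the eigenbasis computation, and the lattice-invariance of $(M|\,\cdot\,)$) are essentially formal.
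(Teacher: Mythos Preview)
The paper does not prove Theorem~\ref{cfr}; it is quoted from~\cite{Karpenkov2012} and closed with a \qed, so there is no in-paper argument to compare against. Your proposal is an independent attempt at the cited result, and its architecture is sound: the bridge identity $\varsigma\big((M|v)\big)=\Delta_A(v)$ together with the representability remark after the algorithm reduces (ii) to a tautology, and the eigenbasis formula $\Delta_A(v)=\kappa\,|x|\,|\zeta|^2$ with its $T^1(A)$-invariance is correct.

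Two remarks. First, your justification of $(M|Tv)=(M|v)$ for $T\in\Xi(A)$ via insensitivity of the algorithm to Euclidean choices is more delicate than necessary. A cleaner route: the perfectness inequalities force the basis $\{e_1,e_2,e_3\}$ to be the Hermite normal form of $\{v,Av,A^2v\}$, hence $(M|v)$ is uniquely determined by $v$ and the conjugacy class alone; since $T$ commutes with $A$ and lies in $GL(3,\z)$, the basis $\{Te_1,Te_2,Te_3\}$ is an integer basis starting at $Tv$ in which $A$ has the same (perfect) matrix, so by uniqueness $(M|Tv)=(M|v)$.

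Second, the step you flag as ``the real work'' is lighter than you suggest. On each component of $\pi_+$ (say $x>0$), the function $(x,y)\mapsto xy^2$ is log-concave, since $\log x+2\log y$ is concave; hence every superlevel set $\{xy^2\ge c\}$ is convex. Taking $c=c_{\min}$ the minimum of $\Delta_A$ over $\pi$-integer points, all such points lie in the convex region $\{xy^2\ge c_{\min}\}$, and any minimizer $p$ lies on its strictly convex boundary curve. The tangent line to this curve at $p$ is then a supporting line of the convex hull of the $\pi$-integer points meeting it only at $p$, so $p$ is a vertex of the factor-sail. (Irreducibility guarantees no $\pi$-integer point sits on either axis, so $c_{\min}>0$ and the argument is clean.) With this in hand your proof of (i) goes through as written.
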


\subsubsection{One general fact on fundamental domain of
Klein-Voronoi continued fractions for NRS-matrices of $SL(3,\z)$}
Further we use the following statement.

Consider an NRS-operator $A$ in $SL(3,\z)$ and any integer point
$x$ distinct from the origin. Denote by $\Gamma_A^0(p)$ the convex
hull of the union of two orbits corresponding to the points $p$
and $A(p)$. For any integer $k$ we denote by $\Gamma_A^k(p)$ the
set $A^k(\Gamma_A^0(x))$.

\begin{proposition}\label{orbitsCF}
Let $A$ be an NRS-operator in $SL(3,\z)$ and $p$ be an integer
point distinct from the origin. Then there exists a fundamental
domain of the Klein-Voronoi continued fraction for $A$ with all
$($integer$)$ orbit-vertices contained in the set $\Gamma_A^0(p)$.
\end{proposition}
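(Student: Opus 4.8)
The plan is to reduce the three-dimensional containment ``orbit-vertex $\subseteq \Gamma_A^0(p)$'' to a two-dimensional condition on the factor-sail in $\pi_+$, and then to show by convexity that the translates $\Gamma_A^k(p)$ capture \emph{every} orbit-vertex, each exactly one period at a time. First I would pass to the eigenbasis $\{g_1,g_2,g_3\}$ of Subsection~\ref{cf_1} and use the factor projection $\varpi$ sending each orbit $T_A(v)$ to its unique point of $\pi_+$. After a linear change making the elliptic rotations of $T^1(A)$ into genuine circular rotations, $T_A(v)$ is the circle of radius $s$ at height $x$, where $(x,s)=\varpi(v)$, and the induced map $\bar A=\varpi\circ A$ is the hyperbolic map $(x,s)\mapsto(rx,\rho s)$ with $r$ the real eigenvalue and $\rho=|c|$. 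Since $\det A=1$ and the characteristic polynomial is irreducible, $r\rho^2=1$, $r>0$ and $r\ne 1$ (in particular the complex eigenplane $x=0$ contains no nonzero integer point, so $\varpi(p)$ has $x\ne 0$).

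The key computational lemma I would record is that the convex hull of two coaxial circles in parallel planes is a conical frustum whose radius varies linearly (the chord), so that $\varpi(\Gamma_A^0(p))$ is exactly the filled region lying under the chord $[\varpi(p),\varpi(A(p))]$ and above the $x$-axis. Consequently a third orbit satisfies $T_A(q)\subseteq \Gamma_A^0(p)$ \emph{if and only if} $\varpi(q)$ lies in this trapezoid, i.e.\ below the chord. This turns the whole problem into a planar one about where the factor-vertices sit relative to the chords $[\overline{A^kp},\overline{A^{k+1}p}]$, which are inscribed in the single $\bar A$-orbit hyperbola $xs^2=\mathrm{const}$ through $\varpi(p)$.

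Now I would run the convexity argument, which is the heart of the proof. The factor-sail is the origin-facing boundary of the convex hull $K$ of all $\pi$-integer points in one component of $\pi_+$, so it is a convex graph $s=\Sigma(x)$ and $K=\{s\ge\Sigma(x)\}$. Each point $\overline{A^kp}=\bar A^k(\varpi(p))$ is the image of the integer point $A^kp$, hence is $\pi$-integer and lies in $K$, i.e.\ above $\Sigma$. Since $\Sigma$ is convex and the two endpoints of each chord lie above it, the entire chord lies above $\Sigma$; taking all $k\in\z$, the piecewise-linear curve formed by the chords lies above the factor-sail. Therefore every factor-vertex lies below exactly one chord (the heights $r^kx_1$ are strictly monotone and sweep out $(0,\infty)$, so the relevant index $k$ is unique), which by the previous paragraph means the corresponding orbit-vertex lies in $\Gamma_A^k(p)$. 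Applying $A^{-k}$ brings it into $\Gamma_A^0(p)$, and this $A^{-k}$-image is its $\Xi(A)$-translate.

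It remains to assemble an honest fundamental domain inside $\Gamma_A^0(p)$. The trapezoid $\varpi(\Gamma_A^0(p))$ covers the $x$-interval between $x_1$ and $rx_1$, which is a fundamental interval for $x\mapsto rx$; hence $\Gamma_A^0(p)$ contains one full $\langle A\rangle$-period of factor-vertices together with the connecting factor-edges (the edges are included because the region under the chord is convex). Writing $\Xi(A)\cong\z\oplus\z/2\z$ with $A$ a power of the free generator and $-Id$ the involution exchanging the two sails, one $\langle A\rangle$-period meets every $\Xi(A)$-class of orbit-vertices of that sail; selecting one representative per class and folding the opposite sail by $-Id$ produces a fundamental domain all of whose orbit-vertices lie in $\Gamma_A^0(p)$, as required. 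The main obstacle I expect is the convexity step of the third paragraph, namely verifying cleanly that the inscribed chords stay above the convex factor-sail so that \emph{no} orbit-vertex is missed; the frustum identification of the second paragraph and the bookkeeping with $-Id$ and the index of $\langle A\rangle$ in the last paragraph are the secondary technical points.
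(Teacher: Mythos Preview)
Your proposal is correct and follows essentially the same route as the paper. The paper isolates the core step as a separate lemma (that the entire sail on the side of $p$ is contained in $\Gamma_A(p)=\bigcup_k\Gamma_A^k(p)$), proved by the one-line observation that the convex hull of the $\pi$-images $\pi(A^k p)$ sits inside the convex hull of \emph{all} $\pi$-integer points, so the factor-sail lies in the complement; it then passes to $\Gamma_A^0(p)$ by translating with powers of $A$. Your frustum/chord computation and the convexity paragraph are exactly a fleshed-out version of that containment, and your final bookkeeping with $\langle A\rangle$ versus $\Xi(A)$ and $-Id$ makes explicit what the paper leaves implicit. One small imprecision: you phrase the factor-sail as a graph $s=\Sigma(x)$, which is not needed and not argued; the containment of convex hulls already gives that every factor-vertex lies on or below some chord, without assuming graph form.
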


The proof is based on the following lemma. Let
$$
\Gamma_A(p)=\bigcup\limits_{k\in\z}\Gamma_A^k(p).
$$
\begin{lemma}\label{cfLemma}
Consider $A\in SL(3,\z)$ with NRS-matrix and let $p$ be any
integer point distinct from the origin. Then one of the
Klein-Voronoi sails for $A$ is contained in the set $\Gamma_A(p)$.
\end{lemma}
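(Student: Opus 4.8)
The plan is to work in the eigenbasis $OXYZ$ of $A$ introduced in Subsection~\ref{cf_1}, where the real eigenline is the $x$-axis and the complex eigenplane is the $yz$-plane. In these coordinates $A$ acts as multiplication by the real eigenvalue $r$ along the $x$-axis and as a rotation by $\arg c$ composed with scaling by $\rho=|c|$ in the $yz$-plane, while every element of $T^1(A)$ acts as a pure rotation of the $yz$-plane fixing the $x$-axis. Hence each orbit $T_A(v)$ is a circle centered on the $x$-axis and lying in a plane $x=\mathrm{const}$, and the half-plane $\pi_+$ is faithfully coordinatized by the factor coordinates $(x,R)$, where $x$ is the height along the eigenline and $R\ge 0$ is the radius of the orbit. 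Since $\det A=1$ we have $r\rho^2=1$; irreducibility of the characteristic polynomial forces $r\ne 1$ and guarantees that neither the eigenline nor the eigenplane contains nonzero integer points, so all $\pi$-integer points lie in the open region $x\ne 0$, $R>0$. We may assume $r>1$ (otherwise replace $A$ by $A^{-1}$, which has the same sails and the same set $\Gamma_A(p)$), and since $r>0$ the whole orbit $\{A^k(p)\}$ stays in the component of $p$; we aim to show that the sail of that component is contained in $\Gamma_A(p)$.

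First I would describe $\Gamma_A^0(p)$ explicitly. As the convex hull of the two coaxial circles $T_A(p)$ and $T_A(A(p))$ it is a solid of revolution about the eigenline, and at each height its cross-section is a disk whose radius is the linear interpolation between the two radii (a Minkowski-sum computation); thus in factor coordinates $\Gamma_A^0(p)$ is exactly the trapezoid under the chord joining $(x_0,R_0)$ and $(rx_0,\rho R_0)$, where $(x_0,R_0)$ is the factor projection of $p$. Because $A$ commutes with $T^1(A)$, applying $A^k$ sends this picture to the trapezoid under the chord joining the projections of $A^k(p)$ and $A^{k+1}(p)$. The identity $r\rho^2=1$ shows that all these projections $(r^kx_0,\rho^kR_0)$ lie on the single convex hyperbola $R^2x=R_0^2x_0$, and since $r>1$ their heights $r^kx_0$ exhaust $(0,\infty)$ without gaps. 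Consequently $\Gamma_A(p)$ is a solid of revolution whose factor cross-section is precisely the region under the monotone decreasing polygonal graph $L_p$ inscribed in this hyperbola through the points $A^k(p)$.

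It then remains to show that the factor-sail lies under $L_p$. Each $A^k(p)$ is an integer point, hence a $\pi$-integer point of the chosen component, so it belongs to the convex hull whose origin-facing boundary is, by definition, the factor-sail. This hull is an unbounded convex region equal to the epigraph of the factor-sail viewed as a monotone decreasing graph $R=s(x)$ over $x>0$. Since the hull is convex and contains every vertex $A^k(p)$, it contains all the chords of $L_p$, whence $L_p(x)\ge s(x)$ for every $x$; that is, the factor-sail lies in the region under $L_p$, which is the factor cross-section of $\Gamma_A(p)$. As both the sail and $\Gamma_A(p)$ are invariant under the rotations $T^1(A)$, this planar containment lifts at once to the containment of the full sail in $\Gamma_A(p)$.

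The routine parts --- the eigenbasis normal form and the interpolation of radii --- are standard. The main obstacle is the interface between the two descriptions: verifying that $\Gamma_A(p)$ really covers every height without gaps (which is where $r\ne 1$ and $r>0$ enter) and, more delicately, justifying that the factor-sail is a single monotone graph bounding its convex hull from the origin side, so that ``contained in the convex hull'' translates into the pointwise inequality $s(x)\le L_p(x)$. Controlling the behavior near the two axes, where both $s$ and $L_p$ approach the axes, and confirming via irreducibility that no stray $\pi$-integer points sit on the axes, is the part that needs the most care.
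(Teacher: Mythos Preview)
Your argument is correct and follows the same route as the paper's proof: project everything to the factor half-plane $\pi_+$, identify the projection of $\Gamma_A(p)$ as the closure of the complement (within the angle) of the convex hull of the orbit points $\pi(A^k(p))$, and then use that this convex hull sits inside the convex hull of all $\pi$-integer points, so its complement contains the factor-sail. Your version is considerably more explicit---working out the frustum shape of each $\Gamma_A^k(p)$, the hyperbola $R^2x=R_0^2x_0$ via $r\rho^2=1$, and the monotone-graph description of the factor-sail---but the logical skeleton is identical to the paper's four-line argument.
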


\begin{proof}
Notice that the set $\Gamma_A(p)$ is a union of orbits. Let us
project $\Gamma_A(p)$ to the halfplane $\pi_+$. The set
$\Gamma_A(p)$ projects to the closure of the complement of the
convex hull for the points $\pi(A^k(p))$ for all integer number
$k$ in the angle defined by eigenspaces. Since all the points
$A^k(p)$ are integer, their convex hull is contained in the convex
hull of all points corresponding to integer orbits in the angle.
Hence $\pi(\Gamma_A(p))$ contains the projection of the sail.
Therefore, the set $\Gamma_A(p)$ contains one of the sails.
\end{proof}

{\noindent {\it Proof of Proposition~\ref{orbitsCF}.} Since $-Id$
exchange the sails, one can choose a fundamental domain entirely
contained in one sail. Let this sail contains a point $p$. By
Lemma~\ref{cfLemma} $\Gamma_A(p)$ contains this sail. Therefore
all the orbit-vertices of a fundamental domain for Klein-Voronoi
continued fraction can be chosen from the factor-set
$\Gamma_A^0(p)$. \qed}

\section{Proof of Theorem~\ref{maintheorem}}\label{structure}

\subsection{Geometry of Klein-Voronoi continued fractions for
matrices of $R^{m,n}_{1,\Omega,v}$} Let us show the following
statement.

\begin{proposition}\label{cfProp}
Consider an NRS-ray $R^{m,n}_{1,\Omega,v}$. Then there exists
$C>0$ such that for any $t>C$ there exists a fundamental domain
for the Klein-Voronoi continued fraction of the matrix
$R^{m,n}_{1,\Omega,v}(t)$ such that all integer points in this
domain are contained in the triangle with vertices $(1,0,0)$,
$(a_{11},a_{21},0)$, and $(-a_{11},-a_{21},0)$.
\end{proposition}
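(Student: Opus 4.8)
The plan is to derive the statement from Proposition~\ref{orbitsCF} applied to the integer point $p=e_1=(1,0,0)$, and then to control the asymptotic shape of the two orbits this produces. Write $A=R^{m,n}_{1,\Omega,v}(t)$ and note that its first column gives $A(e_1)=(a_{11},a_{21},0)$, the second vertex of the target triangle. Let $\mathcal{T}$ denote the triangle with vertices $e_1$, $A(e_1)$ and $-A(e_1)$. Proposition~\ref{orbitsCF} yields a fundamental domain all of whose integer orbit-vertices lie in $\Gamma_A^0(e_1)=\mathrm{conv}\big(T_A(e_1)\cup T_A(A(e_1))\big)$, so it suffices to show that for $t$ large every integer point that is a vertex of the sail and lies in $\Gamma_A^0(e_1)$ already lies in $\mathcal{T}$. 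The third vertex $-A(e_1)$ enters through the symmetry $-\mathrm{Id}\in\Xi(A)$, which acts on the Klein-Voronoi continued fraction and lets me replace any orbit-vertex representative by its antipode when choosing the domain.

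First I would analyze the spectrum of $A$ along the ray. The trace $a_{11}+a_{22}+a_{32}n+a_{33}$ is constant in $t$, while a direct computation of the characteristic polynomial shows that the second coefficient equals $a_{21}a_{32}t+O(1)$ and therefore grows (here $a_{21},a_{32}>0$ since $M$ is perfect). Hence the real eigenvalue satisfies $r\to 0^+$, while the complex pair obeys $|c|=r^{-1/2}\to\infty$ with $\mathrm{Re}(c)$ bounded. Solving $(A-rI)v=0$ to leading order shows the real eigenline tends to $\mathrm{span}(e_1)$, and the companion computation for a complex eigenvector $w$ shows the invariant plane tends to the coordinate plane $\{x=0\}=\mathrm{span}(e_2,e_3)$, with $\mathrm{Re}(w)\to e_2$ and $\mathrm{Im}(w)\to 0$; the two limiting invariant subspaces are transverse, as they must be.

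From this spectral picture the orbits collapse onto the plane $\{z=0\}$ carrying $\mathcal{T}$. Since $e_1$ tends to the eigenline, the orbit $T_A(e_1)$ shrinks to the point $e_1$. The point $A(e_1)=(a_{11},a_{21},0)$ splits as an eigenline part tending to $(a_{11},0,0)$ plus a part in the invariant plane tending to $a_{21}e_2$; because the second semiaxis of the elliptic orbit scales like $|\mathrm{Im}(w)|\to 0$ while the first scales like $|\mathrm{Re}(w)|$, the orbit $T_A(A(e_1))$ degenerates to a segment of $\{z=0\}$ having $A(e_1)$ as one endpoint. Consequently $\Gamma_A^0(e_1)$ is a bounded region whose thickness transverse to $\{z=0\}$ tends to $0$, so for $t$ large all of its integer points have $z=0$.

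The delicate, and in my view hardest, step is to upgrade ``the integer points of $\Gamma_A^0(e_1)$ lie in $\{z=0\}$'' to the containment in $\mathcal{T}$. The flattened hull may contain extra integer points of $\{z=0\}$ (for instance near the far end of the degenerate segment, which is \emph{not} the vertex $-A(e_1)$ unless $a_{11}=0$) that are \emph{not} sail vertices, so one cannot simply take all integer points of $\Gamma_A^0(e_1)$. I would therefore project the finitely many integer orbits meeting $\Gamma_A^0(e_1)$ to the half-plane $\pi_+$ and, via a convexity estimate controlling the projected $\pi$-integer points as $t\to\infty$ (using Lemma~\ref{cfLemma}), show that the factor-sail has only the two vertices carried by the orbits of $e_1$ and of $A(e_1)$. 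This is exactly where genuine cancellations in the eigenvector expansions force one to keep subleading terms, since crude estimates give the wrong limiting segment; that is the technical core. Finally, choosing for each orbit-vertex the representative on the correct side, and applying $-\mathrm{Id}$ to send the orbit of $A(e_1)$ to $-A(e_1)$ when needed, places every orbit-vertex integer point of the fundamental domain inside $\mathcal{T}$, which gives the required $C$ and completes the proof.
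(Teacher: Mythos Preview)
Your overall strategy --- apply Proposition~\ref{orbitsCF} with $p=(1,0,0)$ and then show that $\Gamma_A^0(p)$ collapses onto the target triangle --- is exactly the paper's. The gap is in your direct spectral analysis for a general Hessenberg type~$\Omega$. The limiting complex invariant plane is \emph{not} $\{x=0\}$ unless $a_{11}=0$: using the conjugation in Corollary~\ref{general} one checks that it tends to the plane $a_{21}a_{32}\,x-a_{32}a_{11}\,y+(a_{11}a_{22}-a_{21}a_{12})\,z=0$, which contains $A(e_1)=(a_{11},a_{21},0)$. Likewise $\mathrm{Re}(w)$ tends to the direction of $(a_{11},a_{21},0)$, not to $e_2$. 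Consequently the orbit $T_A(A(e_1))$ is asymptotically centered at the origin, and its degenerate limit is the segment from $A(e_1)$ to $-A(e_1)$, not to $(a_{11},-a_{21},0)$. The mismatch you flagged (``not the vertex $-A(e_1)$ unless $a_{11}=0$'') is therefore an artifact of the wrong eigenplane, not a genuine obstacle; correspondingly, your appeal to $-\mathrm{Id}$ is unnecessary and in any case misapplied, since $-\mathrm{Id}$ exchanges the two sails rather than moving orbit-vertices within one.

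With the correct asymptotics your ``delicate step'' disappears entirely. The paper obtains those asymptotics by first conjugating to the companion type $\Omega_0=\langle 0,1|0,0,1\rangle$ (Lemma~\ref{triangle}), where $a_{11}=0$ and your computation \emph{is} valid, and then transporting the conclusion back (Corollary~\ref{general}). The proof then finishes in one line: choose $\varepsilon_0$ so small that the $\varepsilon_0$-tubular neighborhood of the closed triangle contains no integer points other than those of the triangle itself; for $t$ large, $\Gamma_A^0(e_1)$ lies in this neighborhood, so every integer point of $\Gamma_A^0(e_1)$ --- hence every integer point of the fundamental domain supplied by Proposition~\ref{orbitsCF} --- already lies in the triangle. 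No argument about which of these points are actual sail vertices, and no projection to $\pi_+$, is needed.
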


We begin with the case of matrices of Hessenberg type
$\Omega_0=\langle 0,1|0,0,1\rangle$. Such matrices form a family
$H(\Omega_0)$ with real parameters $m$ and $n$ as before:
$$
H_{\langle 0,1|0,0,1\rangle}^{(1,0,0)}(m,n)= \left(
\begin{array}{ccc}
0 &0 &1\\
1 &0 &m\\
0 &1 &n\\
\end{array}
\right).
$$
Here $v_0=(1,0,0)$.

\begin{lemma}\label{triangle}
Let $R^{m,n}_{1,\Omega_0,v_0}$ be an NRS-ray. Then for any
$\varepsilon>0$ there exists $C>0$ such that for any $t>C$  the
convex hull of the union of two orbit-vertices
$$T_{R^{m,n}_{1,\Omega_0,v_0}(t)}(1,0,0) \quad \hbox{and}
\quad T_{R^{m,n}_{1,\Omega_0,v_0}(t)}(0,1,0)
$$
is contained in the $\varepsilon$-tubular neighborhood of the
convex hull of three points $(1,0,0)$, $(0,1,0)$, $(0,-1,0)$.
\end{lemma}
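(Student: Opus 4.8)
The plan is to diagonalize $R^{m,n}_{1,\Omega_0,v_0}(t)$ asymptotically as $t\to\infty$ and to track the two orbits directly. Writing $m'=m-t$ and $M=-m'=t-m\to+\infty$, the matrix is
$$
\left(\begin{array}{ccc} 0&0&1\\ 1&0&m'\\ 0&1&n \end{array}\right),
$$
with characteristic polynomial $\lambda^3-n\lambda^2+M\lambda-1$. First I would use the Vi\`ete relations $r+2a=n$, $2ar+(a^2+b^2)=M$, $r(a^2+b^2)=1$ (where $r$ is the real eigenvalue and $c=a+bi$ a complex one) to extract $r\sim 1/M\to 0^+$, $a=\mathrm{Re}\,c\to n/2$, and $b=\mathrm{Im}\,c\sim\sqrt{M}\to\infty$. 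The relations I actually need downstream are $a^2+b^2=1/r$ together with the limits $rb^2\to 1$, $rb\to 0$, $r^2b^2\to 0$, all immediate from these.

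Next I would use that $T_A(p)$ is the elliptic-rotation orbit of $p$: the group $T^1(A)\cong S^1$ fixes the real eigenline $\r g_1$ and acts as a genuine rotation in the coordinates of the complex invariant plane $\sspan(g_2,g_3)$, so $T_A(p)$ is an ellipse centered on $\r g_1$ in the plane through $p$ parallel to $\sspan(g_2,g_3)$. Solving $Ag_1=rg_1$ and $Aw=cw$ gives $g_1=(1,(r-n)r,r)$ and $w=(1,c^2-nc,c)$; hence $g_1\to(1,0,0)$, and, using $2a-n=-r$, one finds $g_2=\mathrm{Re}\,w=(1,a^2-b^2-na,a)$ and $g_3=\mathrm{Im}\,w=(0,-br,b)$. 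So the eigenline tends to the $x$-axis and the complex plane tends, directionally, to the $yz$-plane, even though the coordinates of $g_2,g_3$ blow up like $b^2$ and $b$.

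The heart of the argument is to decompose $(1,0,0)$ and $(0,1,0)$ in the basis $\{g_1,g_2,g_3\}$ and read off each orbit. Solving the two $3\times 3$ systems and simplifying the bracket that appears in both, namely $-(a^2+b^2)+2ar-r^2=-1/r+2ar-r^2$, via the Vi\`ete relations, I expect $u_2=O(r^2)$ and $u_3=O(r/b)$ for $(1,0,0)$, so that $u_2g_2\to 0$ (because $r^2b^2\to 0$) and $u_3g_3\to 0$ while the $g_1$-coefficient tends to $1$; thus $T(1,0,0)$ shrinks to the single point $(1,0,0)$. For $(0,1,0)$ I expect $u_2'\sim -r$ and $u_3'=O(r/b)$, so its center $\alpha'g_1\to 0$, while the rotating part $sg_2+tg_3$ with $s^2+t^2\sim r^2$ satisfies $sg_2+tg_3\to(0,-\cos\psi,0)$ as the angle varies; here the decisive cancellation is $rg_2\to(0,-1,0)$ (because $rb^2\to 1$) against $rg_3\to 0$. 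Hence $T(0,1,0)$ converges to the segment with endpoints $(0,-1,0)$ and $(0,1,0)$.

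Finally, since $T(1,0,0)$ converges in Hausdorff distance to the vertex $(1,0,0)$ and $T(0,1,0)$ converges in Hausdorff distance to the segment $[(0,-1,0),(0,1,0)]$, the convex hull of their union converges in Hausdorff distance to the convex hull of the three points $(1,0,0)$, $(0,1,0)$, $(0,-1,0)$, which is exactly the triangle in the statement. Choosing $C$ so large that both Hausdorff distances stay below $\varepsilon/2$ for $t>C$ then yields the desired inclusion in the $\varepsilon$-tubular neighborhood. I expect the main obstacle to be the bookkeeping of the third paragraph: because $g_2,g_3$ have unbounded coordinates, convergence of the orbits rests on delicate cancellations (most notably $u_2'b^2$ staying bounded because $u_2'\sim r$ and $rb^2\to 1$), and one must check that all error terms are uniform in the rotation angle, so that genuine Hausdorff convergence of the ellipses — not just pointwise convergence of individual orbit points — is obtained.
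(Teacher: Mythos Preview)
Your proposal is correct and follows essentially the same line as the paper's proof: compute the asymptotics of the real eigenvector and of the complex eigenplane as $t\to\infty$, then show that the orbit of $(1,0,0)$ collapses to the point $(1,0,0)$ while the orbit of $(0,1,0)$ flattens onto the segment $[(0,-1,0),(0,1,0)]$. The only technical difference is that the paper works with the Euclidean principal semi-axes $g_{\max}(t)=(0,t,0)+O(1)$ and $g_{\min}(t)=(0,0,t^{1/2})+O(t^{-1/2})$ of the elliptical orbits and uses their length ratio $|t|^{1/2}+O(|t|^{-1/2})$ to bound each orbit, whereas you decompose directly in the eigenbasis $\{g_1,g_2,g_3\}$ and track the cancellations $rb^2\to 1$, $rb\to 0$; both routes lead to the same $O(t^{-1/2})$ bounds and hence to the desired inclusion.
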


{\noindent {\it Remark.} Actually Lemma~\ref{triangle} means that
the corresponding domain tends to be flat while the parameter $t$
tends to infinity.}

\begin{proof}
Let us find the asymptotics of eigenvectors and eigenplanes for
operators $R^{m,n}_{1,\Omega_0,v_0}(t)$ while $t$ tends to
$+\infty$. Denote the real eigenvector of
$R^{m,n}_{1,\Omega_0,v_0}(t)$ by $e(t)$. We have
$$
e(t)=\mu\big((1,0,0)+O(t^{-1})\big)
$$
for some nonzero real $\mu$.

Consider the unique invariant real plane of the operator
$R^{m,n}_{1,\Omega_0,v_0}(t)$ (it corresponds to the pair of
complex conjugate eigenvalues). Notice that this plane is a union
of all orbits $T_{R^{m,n}_{1,\Omega_0,v_0}(t)}(w)$ for arbitrary
vectors $w$ of this plane. Any such orbit is an ellipse with axes
$\lambda g_{\max}(t)$ and $\lambda g_{\min}(t)$ for some positive
real number $\lambda$, where
$$
\begin{array}{l}
g_{\max}(t)=(0,t,0)+O(1),\\
g_{\min}(t)=(0,0,t^{1/2})+O(t^{-1/2}).\\
\end{array}
$$
Actually, the vectors $g_{\max}(t){\pm}I g_{\min}(t)$ are two
complex eigenvectors of $R^{m,n}_{1,\Omega_0,v_0}(t)$. For the
ratio of the lengths of maximal and minimal axes of any orbit we
have the following asymptotic estimate:
$$
\frac{\lambda|g_{\max}(t)|}{\lambda|g_{\min}(t)|}=|t|^{1/2}+O(|t|^{-1/2}).
$$

Since
$$
(1,0,0)-\frac{1}{\mu}e(t)=O(|t|^{-1}),
$$
the minimal axis of the orbit-vertex
$T_{R^{m,n}_{1,\Omega_0,v_0}(t)}(1,0,0)$ is asymptotically not
greater than $O(t^{-1})$. Therefore, the length of the maximal
axis is asymptotically not greater than some function of type
$O(|t|^{-1/2})$. Hence, the orbit of the point $(1,0,0)$ is
contained in the $(C_1|t|^{-1/2})$-ball of the point $(1,0,0)$,
where $C_1$ is a constant that does not depend on $t$.

We have
$$
(0,1,0)-\frac{1}{t}g_{\max}(t)=O(|t|^{-1}).
$$
Therefore, the length of the maximal axis of the orbit-vertex
$T_{R^{m,n}_{1,\Omega_0,v_0}(t)}(1,0,0)$ is asymptotically not
greater than some function $1+O(t^{-1/2})$. Hence, the length of
the minimal axis is asymptotically not greater than some function
$O(|t|^{-1/2})$. This implies that the orbit of the point
$(0,1,0)$ is contained in the $(C_2|t|^{-1/2})$-tubular
neighborhood of the segment with vertices $(0,1,0)$ and
$(0,-1,0)$, where $C_2$ is a constant that does not depend on $t$.

Therefore, the convex hull of the union of two orbit-vertices
$$
T_{R^{m,n}_{1,\Omega_0,v_0}(t)}(1,0,0)\quad \hbox{and} \quad
T_{R^{m,n}_{1,\Omega_0,v_0}(t)}(0,1,0)
$$
is contained in the $C$-tubular neighborhood of the triangle with
vertices $(1,0,0)$, $(0,1,0)$, $(0,-1,0)$, where
$C=\max(C_1,C_2)|t|^{-1/2}$. This concludes the proof of the
lemma.
\end{proof}

Let us now formulate a similar statement for the general case of
Hessenberg matrices.

\begin{corollary}\label{general}
Let $\Omega=\langle a_{11},a_{21}|a_{12},a_{22},a_{32}\rangle$ and
$R^{m,n}_{1,\Omega,v}$ an NRS-ray. Then for any $\varepsilon>0$
there exists $C>0$ such that for any $t>C$ the convex hull of the
union of two orbit-vertices
$$
T_{R^{m,n}_{1,\Omega,v}(t)}(1,0,0)\quad \hbox{and} \quad
T_{R^{m,n}_{1,\Omega,v}(t)}(a_{11},a_{21},0)
$$
is contained in the $\varepsilon$-tubular neighborhood of the
convex hull of three points $(1,0,0)$, $(a_{11},a_{21},0)$,
$(-a_{11},-a_{21},0)$.
\end{corollary}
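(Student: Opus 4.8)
The plan is to deduce Corollary~\ref{general} from Lemma~\ref{triangle} by reusing the rational conjugacy established in Lemma~\ref{l2parab2}. Recall that Lemma~\ref{l2parab2} exhibits, for each Hessenberg type $\Omega$, an explicit integer matrix $X_\Omega^v$ conjugating $H_\Omega^v(m,n)$ to $H_{\langle 0,1|0,0,1\rangle}^{(1,0,0)}$ evaluated at the affinely transformed parameters. The first step is to observe that conjugation by a fixed linear map $X$ carries Klein--Voronoi continued fractions to Klein--Voronoi continued fractions: orbits $T_A(v)$ are sent to orbits $T_{XAX^{-1}}(Xv)$, orbit-vertices to orbit-vertices, and convex hulls to convex hulls (since $X$ is linear it preserves convexity). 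Hence the whole statement for $\Omega$ is the image under $X_\Omega^v$ (or rather its inverse) of the corresponding statement for $\Omega_0=\langle 0,1|0,0,1\rangle$ already proved in Lemma~\ref{triangle}.

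Second, I would track where the three distinguished points go under the linear map. The inverse map $(X_\Omega^v)^{-1}$ must send the triangle with vertices $(1,0,0)$, $(0,1,0)$, $(0,-1,0)$ to the triangle with vertices $(1,0,0)$, $(a_{11},a_{21},0)$, $(-a_{11},-a_{21},0)$. Reading off the columns of $X_\Omega^v$, one checks that $X_\Omega^v(1,0,0)=(1,0,0)$, so the first eigenvalue-type vertex is fixed, and that the vector $(0,1,0)$ is carried to a multiple of $(a_{11},a_{21},0)$, with $(0,-1,0)$ going to its negative. The vertices $(1,0,0)$ and $(a_{11},a_{21},0)$ are precisely the vectors $v_0$ and $A(v_0)$ for the family $H_\Omega^v$ (up to the normalizations built into Definition~\ref{rays}), which is why the orbit-vertices in the statement are indexed by $(1,0,0)$ and $(a_{11},a_{21},0)$. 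This bookkeeping is the only genuinely computational part, and it is a routine check of the action of the explicit matrix on three coordinate vectors.

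Third, I would handle the $\varepsilon$-tubular neighborhood. A fixed linear isomorphism $X$ is bi-Lipschitz, so it distorts Euclidean distances by at most a factor depending only on $X$ (its operator norm and that of its inverse). Thus if the convex hull of the two orbit-vertices for $R^{m,n}_{1,\Omega_0,v_0}(t)$ lies in the $\delta$-tubular neighborhood of the standard triangle, its image lies in the $(\|(X_\Omega^v)^{-1}\|\,\delta)$-tubular neighborhood of the transformed triangle. Given the target $\varepsilon$, one chooses $\delta=\varepsilon/\|(X_\Omega^v)^{-1}\|$ and feeds this $\delta$ into Lemma~\ref{triangle} to obtain the threshold $C$; the resulting $C$ then works for the corollary. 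One must also note that the NRS-ray $R^{m,n}_{1,\Omega,v}$ is mapped under the affine correspondence of Lemma~\ref{l2parab2} to an NRS-ray of type $\Omega_0$ with direction $(-1,0)$, so that Lemma~\ref{triangle} genuinely applies to the image family.

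The main obstacle I anticipate is not any single estimate but the careful verification that the parameter along the ray is compatible on both sides. Lemma~\ref{l2parab2} is stated for the discriminant curve and the affine change of $(m,n)$; to invoke Lemma~\ref{triangle} I must confirm that as the discrete parameter $t$ of $R^{m,n}_{1,\Omega,v}$ tends to infinity, the image parameter also tends to infinity along a genuine NRS-ray of type $\Omega_0$ in direction $(-1,0)$, so that the asymptotic regime of Lemma~\ref{triangle} is reached. This amounts to checking that the affine map of Lemma~\ref{l2parab2} sends the direction $(-1,0)$ to (a positive multiple of) the direction $(-1,0)$, which follows from the lower-triangular-like structure of the linear part; once that is confirmed, the bi-Lipschitz transfer makes the rest automatic.
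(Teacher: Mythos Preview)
Your approach is correct and is essentially the paper's own proof: the paper also writes down an explicit upper-triangular conjugating matrix (in fact a scalar multiple of $(X_\Omega^{v})^{-1}$), observes that it carries the ray $R^{m,n}_{1,\Omega,v}$ to a ray $R^{\tilde m,\tilde n}_{1,\Omega_0,v_0}$, applies Lemma~\ref{triangle}, and then transports the conclusion back by the linear change of coordinates. Your added remarks on the bi-Lipschitz transfer of the $\varepsilon$-tube and on checking that the direction $(-1,0)$ is preserved by the linear part of the affine map in Lemma~\ref{l2parab2} make explicit exactly what the paper leaves implicit in the sentence ``if we reformulate the last statement \ldots\ in old coordinates, then we get the statement of the corollary.''

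One small slip to fix when you write this up: you first say that $(X_\Omega^{v})^{-1}$ sends the $\Omega_0$-triangle to the $\Omega$-triangle, but then (correctly) verify $X_\Omega^{v}(1,0,0)=(1,0,0)$ and $X_\Omega^{v}(0,1,0)=(a_{11},a_{21},0)$; since $H_\Omega = X_\Omega^{v} H_{\Omega_0} (X_\Omega^{v})^{-1}$, it is $X_\Omega^{v}$ itself that carries the $\Omega_0$-orbits, triangle, and $\delta$-tube to the $\Omega$-side, so the constant in the tube estimate should be $\|X_\Omega^{v}\|$ rather than $\|(X_\Omega^{v})^{-1}\|$.
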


\begin{proof}
Denote $\Omega=\langle a_{11},a_{21}|a_{12},a_{22},a_{32}\rangle$
and choose
$$
X= \left(
\begin{array}{ccr}
a_{21}a_{32} & -a_{32}a_{11} & a_{11}a_{22}-a_{21}a_{12} \\
0              &a_{32}          & -a_{11}-a_{22}              \\
0              &0                & 1                             \\
\end{array}
\right).
$$

A direct calculation shows that
$$
H_{\langle
0,1|0,0,1\rangle}^{(1,0,0)}\left(l_1(m,n)-\frac{t}{a_{21}a_{32}},l_2(n_0)\right)
=XH_{\Omega}(m-t,n)X^{-1},
$$
where $l_1$ and $l_2$ are linear functions with coefficients
depending only on $a_{11}$, $a_{21}$, $a_{12}$, $a_{22}$, and
$a_{32}$. Therefore, the ray $R^{m,n}_{1,\Omega,v}$  after the
described change of coordinates and a homothety is taken to the
ray $R^{\tilde m, \tilde n}_{1,\Omega_0,v_0}$ of matrices with
Hessenberg type $\Omega_0=\langle 0,1|0,0,1\rangle$ for certain
$\tilde m$ and $\tilde n$.

Lemma~\ref{triangle} implies the following. For any
$\varepsilon>0$ there exists a positive constant such that for any
$t$ greater than this constant the convex hull of the union of two
orbit-vertices
$$
T_{R^{\tilde m, \tilde n}_{1,\Omega_0,v_0}(t)}(1,0,0)\quad
\hbox{and} \quad T_{R^{\tilde m, \tilde
n}_{1,\Omega_0,v_0}(t)}(0,1,0)
$$
is contained in the $\varepsilon$-tubular neighborhood of the
triangle with vertices $(1,0,0)$, $(0,1,0)$, $(0,-1,0)$.

Now if we reformulate the last statement for the family of
operators in old coordinates, then we get the statement of the
corollary.
\end{proof}

{\it Proof of Proposition~\ref{cfProp}}. We note that the operator
$R^{m,n}_{1,\Omega,v}(t)$ takes the point $(1,0,0)$ to the point
$(a_{11},a_{21},0)$. Therefore, the convex hull of the union of
two orbit-vertices
$$
T_{R^{m,n}_{1,\Omega,v}(t)}(1,0,0)\quad \hbox{and} \quad
T_{R^{m,n}_{1,\Omega,v}(t)}(a_{11},a_{21},0)
$$
(we denote it by $W(t)$) coincides with the set
$\Gamma_{R^{m,n}_{1,\Omega,v}(t)}^0(1,0,0)$.

From Proposition~\ref{orbitsCF} it follows that there exists a
fundamental domain for the continued fraction with all its
orbit-vertices contained in $W(t)$. Choose a sufficiently small
$\varepsilon_0$ such that the $\varepsilon_0$-tubular neighborhood
of the triangle with vertices
$$
(1,0,0), \qquad (a_{1,1},a_{2,1},0), \quad \hbox{and} \quad
(-a_{11},-a_{21},0)
$$
does not contain integer points distinct from the points of the
triangle. From Corollary~\ref{general} it follows that for a
sufficiently large $t$ the set $W(t)$ is contained in the
$\varepsilon_0$-tubular neighborhood of the triangle. This implies
the statement of Proposition~\ref{cfProp}.
\qed

\subsection{Geometry of Klein-Voronoi continued fractions for
matrices of $R^{m,n}_{2,\Omega,v}$}

Now let us study the remaining case of the rays of matrices with
asymptotic direction $(a_{11},a_{21})$.

\begin{proposition}\label{cfProp2}
Consider an NRS-ray $R^{m,n}_{2,\Omega,v}$. Then there exists
$C>0$ such that for any $t>C$ there exists a fundamental domain
for the Klein-Voronoi continued fraction of the matrix
$R^{m,n}_{2,\Omega,v}(t)$ such that all integer points in this
domain are contained in the triangle with vertices $(1,0,0)$,
$(-1,0,0)$, and $(a_{11},a_{21},0)$.
\end{proposition}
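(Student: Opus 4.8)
The plan is to follow the architecture of the proof of Proposition~\ref{cfProp}, replacing the asymptotic input of Lemma~\ref{triangle} by the analysis appropriate to the second asymptotic direction, and to reduce to the base type $\Omega_0=\langle 0,1|0,0,1\rangle$ by the same rational conjugation $X$ used in Corollary~\ref{general}. Two facts make this reduction the right one. First, $X$ carries the whole family $H_\Omega$ to the family $H_{\Omega_0}$, and a direct computation of the induced affine map on the parameter plane $OMN$ (as in Corollary~\ref{general}) shows that it sends the direction $(a_{11},a_{21})$ to the direction $(0,1)$; hence it takes the ray $R^{m,n}_{2,\Omega,v}$, after a homothety, to a second-type ray $R^{\tilde m,\tilde n}_{2,\Omega_0,v_0}$. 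Second, $X^{-1}(1,0,0)$ is proportional to $(1,0,0)$ and $X^{-1}(0,1,0)$ is proportional to $(a_{11},a_{21},0)$, so $X^{-1}$ carries the base triangle $(1,0,0),(-1,0,0),(0,1,0)$ exactly onto the triangle $(1,0,0),(-1,0,0),(a_{11},a_{21},0)$ of the statement. As in the proof of Proposition~\ref{cfProp}, the conjugation is used only to import the eigenstructure asymptotics (which are insensitive to the lattice), while the integer-point analysis is then carried out in the original coordinates over $\z^3$.

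The heart of the matter is the analogue of Lemma~\ref{triangle}. For $M=R^{m,n}_{2,\Omega_0,v_0}(t)$ the characteristic polynomial is $\lambda^3-(n{+}t)\lambda^2-m\lambda-1$, so as $t\to+\infty$ the real eigenvalue grows like $t$ while the two complex eigenvalues have modulus of order $t^{-1/2}$. Consequently the real eigenvector tends to the direction $(0,0,1)$ and the invariant complex plane tends to the coordinate plane $\{z=0\}$, which is precisely the plane of the triangle; transporting through $X$ this says, in general, that the complex eigenplane tends to $\sspan\{(1,0,0),(a_{11},a_{21},0)\}$. I would then read off from the explicit parametrisation of the orbits that $T_M(1,0,0)$ and $T_M(a_{11},a_{21},0)$ flatten onto this plane, with transverse extent $O(t^{-1/2})$, and that the only integer points they carry are the antipodal pairs $\pm(1,0,0)$ and $\pm(a_{11},a_{21},0)$ respectively. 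This is the step that genuinely differs from the first ray: there the distinguished vertex $(1,0,0)$ sat near the real eigenvector and had a one-point orbit, whereas here both vertices lie in the limiting complex eigenplane and their orbits are full (large but flat) ellipses.

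With this geometry in hand I would produce the fundamental domain explicitly. Since $(a_{11},a_{21},0)=M(1,0,0)$, the vertices $(1,0,0)$ and $(a_{11},a_{21},0)$ are consecutive under the generator of the $\z$-part of $\Xi(M)$; Proposition~\ref{orbitsCF} applied with $p=(1,0,0)$ places a fundamental domain inside $\Gamma^0_M(1,0,0)=\mathrm{conv}\big(T_M(1,0,0)\cup T_M(a_{11},a_{21},0)\big)$, and I would take the half-open domain consisting of the open orbit-vertex $T_M(1,0,0)$ together with the open orbit-edge joining it to $T_M(a_{11},a_{21},0)$. The integer points of this domain are then exactly $\pm(1,0,0)$, lying on the single orbit-vertex: the neighbouring vertex $T_M(a_{11},a_{21},0)$ — and in particular its antipode $(-a_{11},-a_{21},0)$ — belongs to the excluded $\Xi$-boundary, and for $t$ large the thin short orbit-edge carries no interior integer point. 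All of these lie in the triangle, and transporting back through $X$ completes the proof.

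The main obstacle is precisely the feature that forces a separate statement. Because the eigenplane, and with it the whole convex hull $\Gamma^0_M(1,0,0)$, grows in-plane like $t^{1/2}$ while flattening only transversally, the clean device of the first ray — enclosing $\Gamma^0$ in an $\varepsilon_0$-tube of a fixed bounded triangle containing no stray integer points — is unavailable: the flat region $\Gamma^0$ contains many lattice points, for instance the $(0,k,0)$ with $|k|\lesssim t^{1/2}$. One must instead argue at the level of the factor-sail, controlling the integer points lying on the individual orbit-vertices rather than in the bulk of $\Gamma^0$, and track the half-open structure so that $(-a_{11},-a_{21},0)$ is discarded while both $(1,0,0)$ and $(-1,0,0)$ are retained. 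A further subtlety to handle is that, in the limit, both distinguished vertices project onto the ray separating the two factor-sails, since they lie in the complex eigenplane $x=0$; thus the relevant orbit-vertex is a boundary vertex carrying an antipodal pair, and verifying that exactly the pair $\pm(1,0,0)$ — and nothing more — survives in the chosen fundamental domain is the crux of the argument.
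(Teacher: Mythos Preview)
Your route differs from the paper's in a crucial way. The paper does not redo the eigenstructure analysis for the second asymptotic direction; instead, for the base type $\Omega_0$ it produces a matrix $X\in GL(3,\z)$ (depending on $t$) relating $H_{\Omega_0}(m,n{+}t)$ to $H_{\Omega_0}(-n{-}t,-m)$, after first noting that the Klein--Voronoi continued fractions of $A$ and $A^{-1}$ coincide. Because $X$ is unimodular it carries integer points to integer points, so Proposition~\ref{cfProp} for the first-type ray $R^{-n,-m}_{1,\Omega_0,v_0}$ transports directly to the second-type ray: no new asymptotic estimate is needed, and the stated triangle is simply the image of the first-type triangle under $X$ (up to the $-Id$ symmetry between sails). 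The passage to general $\Omega$ is then the same rational conjugation you describe. This is considerably shorter than a direct analysis.

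Your geometric observation is nonetheless correct and worth recording: for the second-type ray the orbit $T_M(1,0,0)$ is a large ellipse with $y$-extent of order $t^{1/2}$, so $\Gamma^0_M(1,0,0)$ does \emph{not} lie in any fixed $\varepsilon$-tube, and the paper's Lemma~\ref{triangle2} is literally false as written (indeed the displayed conjugation identity there cannot hold, since $H(m,n{+}t)$ and $H(-n{-}t,-m)$ have different characteristic polynomials --- an inverse is missing). What makes the paper's reduction go through is not metric smallness of $\Gamma^0$ but the unimodularity of $X$, which you did not exploit. Your direct route could also be completed, but two points in your sketch need repair: the orbit $T_M(1,0,0)$ does not contain $(-1,0,0)$ --- that point lies on $-T_M(1,0,0)$, hence on the opposite sail --- so there is no ``antipodal pair'' on a single orbit-vertex; and ruling out stray integer points on the long thin orbit and on the orbit-edge is exactly the step you flag as the crux and leave undone.
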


The proof of this proposition is based on the corollary of the
following lemma. We remind that $\Omega_0=\langle
0,1|0,0,1\rangle$.

\begin{lemma}\label{triangle2}
Let $R^{m,n}_{2,\Omega_0,v_0}$ be an NRS-ray. Then for any
$\varepsilon>0$ there exists $C>0$ such that for any $t>C$ the
convex hull of the union of two orbit-vertices
$$
T_{R^{m,n}_{2,\Omega_0,v_0}(t)}(1,0,0) \quad \hbox{and} \quad
T_{R^{m,n}_{2,\Omega_0,v_0}(t)}(0,1,0)
$$
is contained in the
$\varepsilon$-tubular neighborhood of the convex hull of three
points $(1,0,0)$, $(-1,0,0)$, $(0,1,0)$.
\end{lemma}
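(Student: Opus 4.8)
The plan is to imitate the proof of Lemma~\ref{triangle}, now for the second family, whose $t$-th matrix is $H^{v_0}_{\Omega_0}(m,n+t)$:
$$
R^{m,n}_{2,\Omega_0,v_0}(t)=\left(\begin{array}{ccc}0&0&1\\1&0&m\\0&1&n+t\end{array}\right),
$$
so that the growing parameter now sits in the lower-right corner rather than in the $(2,3)$-slot. Writing $s=n+t$, I would first work out, as $s\to+\infty$, the asymptotics of the real eigenvector $e(t)$ and of the two semi-axes $g_{\max}(t)$ and $g_{\min}(t)$ of a generic orbit-ellipse in the invariant plane spanned by the real and imaginary parts of a complex eigenvector. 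As in the previous lemma these three vectors govern the shape of every orbit $T_{R^{m,n}_{2,\Omega_0,v_0}(t)}(w)$, so everything reduces to controlling them.

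The heart of the matter is this eigen-asymptotic computation, and it is the step I expect to be the main obstacle. The characteristic polynomial is $\lambda^3-s\lambda^2-m\lambda-1$, which for large $s$ has a single large real root $r=s+O(s^{-1})$ and a complex conjugate pair of small modulus $|c|=r^{-1/2}=s^{-1/2}+O(s^{-3/2})$. From these I would extract not only the lengths but, crucially, the leading \emph{directions} of $e(t)$, $g_{\max}(t)$, $g_{\min}(t)$, since it is the directions that decide along which coordinate axes the limiting triangle lies. The two axes again obey the square-root law
$$
\frac{|g_{\max}(t)|}{|g_{\min}(t)|}=|t|^{1/2}+O(|t|^{-1/2}),
$$
the very ratio that made the previous argument work; getting the directions exactly right, and not merely the orders of magnitude, is the delicate point.

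With the asymptotics fixed the endgame is the same balancing act as in Lemma~\ref{triangle}. One of the two integer points $(1,0,0)$, $(0,1,0)$ becomes, after normalisation, asymptotically proportional to the real eigenvector, so its orbit is squeezed into a ball of radius $O(|t|^{-1/2})$; the other becomes a $1/|t|$-multiple of $g_{\max}(t)$, so by the square-root law its orbit is a thin ellipse trapped in an $O(|t|^{-1/2})$-tube about a segment. Taking the convex hull of the vanishing ball and the collapsing segment then pins the whole domain inside an $O(|t|^{-1/2})$-tubular neighbourhood of the triangle with vertices $(1,0,0)$, $(-1,0,0)$, $(0,1,0)$; choosing $C$ so large that this $O(|t|^{-1/2})$ is below $\varepsilon$ for $t>C$ completes the proof, and the remark after Lemma~\ref{triangle} (that the domain becomes flat) applies verbatim.

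As a cross-check, and to avoid recomputing eigenvectors from scratch, I would also try to deduce the estimate from Lemma~\ref{triangle} itself. Conjugating the inverse matrix by the coordinate-reversing permutation $P\in GL(3,\z)$ turns $R^{m,n}_{2,\Omega_0,v_0}(t)$ into a first-family matrix $H^{v_0}_{\Omega_0}\big({-}(n{+}t),{-}m\big)$, whose $(2,3)$-entry tends to $-\infty$, and $A^{-1}$ has the same Klein--Voronoi data as $A$ (same eigendirections, same integer orbits). Since $T_{PA^{-1}P}(w)=P\,T_A(Pw)$, Lemma~\ref{triangle} for the image ray transports back under $P$ to a flat-triangle estimate for $A$. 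The point to verify with care on this route is the matching of the orbit-vertices under $P$ and the precise identification of the resulting target triangle.
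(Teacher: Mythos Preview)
Your ``cross-check'' route is precisely the paper's proof. The paper first notes that $A$ and $A^{-1}$ share the same Klein--Voronoi continued fraction, then exhibits an explicit $X\in GL(3,\z)$ (with $t$-dependent entries) relating $H^{v_0}_{\Omega_0}(m,n+t)$ to the first-family matrix $H^{v_0}_{\Omega_0}(-n-t,-m)=R^{-n,-m}_{1,\Omega_0,v_0}(t)$, and then simply quotes Lemma~\ref{triangle}. The paper's $X$ satisfies $X(1,0,0)=(0,-1,0)$ and $X(0,1,0)=(-1,0,0)$, so the two orbit-basepoints of Lemma~\ref{triangle2} are carried exactly to (signed) basepoints of Lemma~\ref{triangle}. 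Your coordinate-reversing permutation $P$, by contrast, sends $(1,0,0)$ to $(0,0,1)$, which is \emph{not} one of Lemma~\ref{triangle}'s basepoints, so the caveat in your final sentence is well-founded --- with $P$ you would still need to control $T_B(0,0,1)$ separately.

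Your primary direct approach runs into a genuine obstacle you have not anticipated. For $A=H^{v_0}_{\Omega_0}(m,n+t)$ the real eigenvalue is $r\approx s=n+t$, and the corresponding eigenvector is proportional to $\big(1,(1+mr)/r,r\big)\approx(1,m,s)$, which after normalisation tends to the direction $(0,0,1)$, not to $(1,0,0)$ or $(0,1,0)$. So neither basepoint is ``asymptotically proportional to the real eigenvector'', and the Lemma~\ref{triangle} template --- one orbit shrinking to a point because its basepoint sits on the eigenline, the other collapsing to a segment because it lies along $g_{\max}$ --- does not transfer as you expect. In fact $(1,0,0)$ lies essentially along the \emph{minor} semi-axis direction of the orbit-ellipses, so the naive estimate gives its $T_A$-orbit a semi-major axis of order $\sqrt{t}$ rather than $O(t^{-1/2})$. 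The reduction to Lemma~\ref{triangle} via the tailored conjugation is therefore the substance of the argument, not merely a shortcut.
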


\begin{proof}
First, we note that the continued fractions for the operators $A$
and $A^{-1}$ coincide.

Secondly, the following holds:
$$
H_{\langle 0,1|0,0,1\rangle}^{(1,0,0)}(m,n+t)=
XH_{\langle 0,1|0,0,1\rangle}^{(1,0,0)}(-n-t,-m)X^{-1},
$$
where
$$
X= \left(
\begin{array}{ccr}
0&-1&-n-t \\
-1&0&-m   \\
0&0&-1    \\
\end{array}
\right).
$$

Thus, in the new coordinates we obtain the equivalent statement
for the ray $R^{-n,-m}_{1,\Omega_0,v_0}(t)$. Now
Lemma~\ref{triangle2} follows directly from Lemma~\ref{triangle}.
\end{proof}

\begin{corollary}\label{general2}
Let $\Omega=\langle a_{11},a_{21}|a_{12},a_{22},a_{32}\rangle$ and
$R^{m,n}_{2,\Omega,v}$ be an NRS-ray. Then for any $\varepsilon>0$
there exists $C>0$ such that for any $t>C$ the convex hull of the
union of two orbit-vertices
$$
T_{H_\Omega^{v}(m+a_{11}t,n+a_{21}t)}(1,0,0)\quad \hbox{and} \quad
T_{H_\Omega^{v}(m+a_{11}t,n+a_{21}t)}(a_{11},a_{21},0)
$$
is contained in the $\varepsilon$-tubular neighborhood of the
triangle with vertices $(1,0,0)$, $(-1,0,0)$, and
$(a_{11},a_{21},0)$. \qed
\end{corollary}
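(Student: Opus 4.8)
The plan is to mirror the proof of Corollary~\ref{general} almost verbatim, replacing Lemma~\ref{triangle} by Lemma~\ref{triangle2} and the first asymptotic direction by the second. First I would reuse the very same linear conjugating matrix $X$ that appears in the proof of Corollary~\ref{general}, which carries the whole family $H_\Omega^v$ to the model family $H_{\langle 0,1|0,0,1\rangle}^{(1,0,0)}$ and induces a fixed affine map on the parameter plane $OMN$. Since $X$ conjugates every member of the family, after this change of coordinates and a homothety the ray $R^{m,n}_{2,\Omega,v}$ is taken to a ray $R^{\tilde m,\tilde n}_{2,\Omega_0,v_0}$ of the model family, for suitable $\tilde m,\tilde n$.

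The point that fixes the direction is cheap. By Lemma~\ref{l2parab2} the map $X$ carries $NRS(\Omega)$ to $NRS(\Omega_0)$, so the induced parameter map permutes the two asymptotic directions furnished by Proposition~\ref{asdir}, namely $\{(-1,0),(a_{11},a_{21})\}$ for $\Omega$ and $\{(-1,0),(0,1)\}$ for $\Omega_0$. Corollary~\ref{general} already records that $X$ sends $(-1,0)$ to $(-1,0)$; hence the remaining direction $(a_{11},a_{21})$ is forced onto the remaining direction $(0,1)$, which is exactly the asymptotic direction defining the second family of rays for $\Omega_0$. This is why the model ray comes out of type $R_2$ rather than $R_1$.

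Next I would track the three relevant space vectors. A short computation gives $X(1,0,0)^{\top}\sim(1,0,0)$, $X(-1,0,0)^{\top}\sim(-1,0,0)$, and $X(a_{11},a_{21},0)^{\top}\sim(0,1,0)$, all up to one common nonzero scalar; thus $X$ carries the triangle with vertices $(1,0,0)$, $(-1,0,0)$, $(a_{11},a_{21},0)$ onto the triangle $(1,0,0)$, $(-1,0,0)$, $(0,1,0)$ of Lemma~\ref{triangle2}. Moreover, because $X$ conjugates the operator and the defining conditions of $T^1$ (a real eigenvalue $1$ and unit-modulus complex eigenvalues) are conjugation invariant, $X$ sends each orbit $T_A(w)$ to $T_{XAX^{-1}}(Xw)$. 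In particular the two orbit-vertices in the statement are carried to the orbit-vertices $T(1,0,0)$ and $T(0,1,0)$ of Lemma~\ref{triangle2}.

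With this dictionary I would apply Lemma~\ref{triangle2} to the model ray: for any $\varepsilon'>0$ and all large $t$ the convex hull of $T(1,0,0)$ and $T(0,1,0)$ lies in the $\varepsilon'$-tubular neighborhood of the triangle $(1,0,0)$, $(-1,0,0)$, $(0,1,0)$. Pulling this statement back through the fixed linear map $X^{-1}$ and absorbing its Lipschitz constant (take $\varepsilon'=\varepsilon/\|X^{-1}\|$) yields the corollary. The only genuinely delicate point is the bookkeeping of the conjugation --- checking that $X$ respects orbits and that the induced parameter map sends the $R_2$-direction of $\Omega$ to the $R_2$-direction of $\Omega_0$ rather than back to the $R_1$-direction; both are handled above, and everything else reduces to the already-established Lemma~\ref{triangle2}, so no new estimates are needed.
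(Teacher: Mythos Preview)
Your proposal is correct and is exactly the argument the paper has in mind: the authors explicitly write that the proof of Corollary~\ref{general2} ``repeats'' that of Corollary~\ref{general}, i.e., one reuses the same conjugating matrix $X$ to pass to the model family $H_{\langle 0,1|0,0,1\rangle}^{(1,0,0)}$ and then invokes Lemma~\ref{triangle2} in place of Lemma~\ref{triangle}. Your extra bookkeeping---checking that the induced parameter map sends the direction $(a_{11},a_{21})$ to $(0,1)$, that $X$ carries the triangle $(1,0,0),(-1,0,0),(a_{11},a_{21},0)$ to $(1,0,0),(-1,0,0),(0,1,0)$ up to a common scalar, and that conjugation respects the orbits $T_A(\cdot)$---is sound and simply makes explicit what the paper leaves implicit.
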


{\noindent {\it Remark.} We omit the proofs of
Corollary~\ref{general2} and Proposition~\ref{cfProp2}, since they
repeat the proofs of Corollary~\ref{general} and
Proposition~\ref{cfProp}.}

\subsection{Conclusion of the proof}

Let us finally conclude the proof of Theorem~\ref{maintheorem}.
Let $A$ be an operator with Hessenberg matrix $M$ in $SL(3,\z)$.
By Theorem~\ref{cfr} any $\varsigma$-reduced matrix congruent to
$M$ is constructed as the matrix $(M|v)$, where $v$ is an integer
vector in an arbitrary chosen fundamental domain of the
Klein-Voronoi continued fractions for $A$, in addition $v$ should
be the minimum of the absolute value of MD-characteristic on the
integer lattice except the origin. To calculate $\varsigma$-reduce
matrices we find all such minima of MD-characteristics in
appropriate fundamental domains.

\vspace{2mm}

{\noindent {\it The case of NRS-rays with asymptotic direction
$(-1,0)$}. Consider an NRS-ray $R^{m,n}_{1,\Omega,v}$. By
Proposition~\ref{cfProp} there exists $C>0$ such that for any
integer $t>C$ we can choose a fundamental domain for the
Klein-Voronoi continued fraction of $R^{m,n}_{1,\Omega,v}(t)$ such
that all its integer points are in the triangle with vertices
$(1,0,0)$, $(a_{11},a_{21},0)$, and $(-a_{11},-a_{21},0)$. }

This triangle contains only finitely many integer points, all of
them have the last coordinate equal to zero. The value of the
MD-characteristic for a point $(x,y,0)$ equals:
$$
(a_{21}x-a_{11}y)a_{32}^2y^2t+\tilde C,
$$
where the constant $\tilde C$ does not depend on $t$, it depends
only on $x$, $y$, and $\Omega$. Therefore, for any point $(x,y,0)$
the MD-characteristic is linear with respect to the parameter~$t$,
and it increases with growth of $t$. The only exceptions are the
points of type $\lambda(1,0,0)$ and $\mu(a_{11},a_{21},0)$ (for
integers $\lambda$ and $\mu$). The values of MD-characteristic are
constant in these points with respect to the parameter $t$.

Since there are finitely many integer points in the triangle
$(1,0,0)$, $(a_{11},a_{21},0)$, and $(-a_{11},-a_{21},0)$, for
sufficiently large $t$ the MD-characteristic at points of the
triangle attains the minima only at $(1,0,0)$ and at
$(a_{11},a_{21},0)$. Since $R^{m,n}_{1,\Omega,v}(t)$ takes the
point $(1,0,0)$ to the point $(a_{11},a_{21},0)$, a fundamental
domain may contain only one of these two points, let it be
$(1,0,0)$.

Therefore, for sufficiently large $t$ the minimum of
MD-characteristic at the integer points of the chosen fundamental
domain is unique and it is attained at point $(1,0,0)$. Hence by
Theorem~\ref{cfr} for sufficiently large $t$ the matrix
$$
\Big(H_{\langle
a_{11},a_{21}|a_{12},a_{22},a_{32}\rangle}^{(1,0,0)}(m-t,n)\Big|(1,0,0)\Big)=
H_{\langle
a_{11},a_{21}|a_{12},a_{22},a_{32}\rangle}^{(1,0,0)}(m-t,n)
$$
is the only $\varsigma$-reduced matrix in the conjugacy class.
This implies both statements of Theorem~\ref{maintheorem} for the
ray $R^{m,n}_{1,\Omega,v}$.

Therefore, Theorem~\ref{maintheorem} holds for any NRS-ray with
asymptotic direction $(-1,0)$.

\vspace{2mm}

{\noindent {\it The case of NRS-rays with asymptotic direction
$(a_{11},a_{21})$}. This case is similar to the case of NRS-rays
with asymptotic direction $(-1,0)$, so we omit the proof here.

}

Proof of Theorem~\ref{maintheorem} is completed. \qed

\section{Open problems}\label{Open}

In this section we formulate open questions on the structure of
the sets of NRS-matrices and briefly describe the situation for
RS-matrices.

\vspace{2mm}

{\noindent{\bf NRS-matrices.} As we have shown in
Theorem~\ref{maintheorem} the number of $\varsigma$-nonreduced
matrices in NRS-rays is always finite. Here we conjecture a
stronger statement.
\begin{conjecture}\label{Pr1}
Let $\Omega$ be an arbitrary Hessenberg type. All but a finite
number of NRS-matrices of type $\Omega$ are $\varsigma$-reduced.
\end{conjecture}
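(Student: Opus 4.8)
\emph{Proof proposal.} The plan is to upgrade the ray-by-ray argument behind Theorem~\ref{maintheorem} to a genuinely two-dimensional statement by making all of its constants uniform over $NRS(\Omega)$. The starting point is the reduction already used in Section~\ref{structure}: by Theorem~\ref{cfr} together with Remark~\ref{DavHes0}, a matrix $H_\Omega^{v}(m,n)$ with operator $A$ is $\varsigma$-reduced exactly when the minimal value of the MD-characteristic $\Delta_A$ over the nonzero integer points of a fundamental domain equals $\Delta_A(1,0,0)$ and is attained only on the Dirichlet orbit of $(1,0,0)$. Now $\Delta_A(1,0,0)$ is the common Hessenberg complexity $\varsigma(\Omega)$ of the whole family $H(\Omega)$: it is the volume of the parallelepiped on $(1,0,0)$, $M(1,0,0)=(a_{11},a_{21},0)$, $M^2(1,0,0)$, none of which involves the parameters $m,n$, so it is a fixed positive integer. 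Hence $H_\Omega^{v}(m,n)$ fails to be $\varsigma$-reduced, or acquires a second reduced representative, only if some competing integer point $w$ off the orbit of $(1,0,0)$ satisfies $\Delta_A(w)\le\varsigma(\Omega)$. The entire task is to show that the set of parameters $(m,n)$ admitting such a competitor is bounded.

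The first key observation is that, for a \emph{fixed} integer point $w=(x,y,0)$ of the plane $z=0$, the value $\Delta_A(w)$ is the absolute value of an affine function $L_w(m,n)$ of the parameters: the parallelepiped spanning $\Delta_A(w)$ uses $w$, $A(w)$, $A^2(w)$, and only $A^2(w)$ depends on $m,n$, and it does so linearly. A direct computation — the same one performed along rays in Section~\ref{structure} — shows that $L_w$ is constant, equal to $\varsigma(\Omega)$, precisely at the four special points $\pm(1,0,0)$ and $\pm(a_{11},a_{21},0)$, all lying on the Dirichlet orbit of $(1,0,0)$; for every other integer point of $z=0$ the function $L_w$ is genuinely affine and, by the linear-growth computation in the proof of Theorem~\ref{maintheorem} (the coefficients $(a_{21}x-a_{11}y)a_{32}^2y^2$ and its analogue for the second arm), non-constant along each of the two asymptotic directions $(-1,0)$ and $(a_{11},a_{21})$ of Proposition~\ref{asdir}. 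Thus for each such $w$ the competitor set $\{(m,n)\mid |L_w(m,n)|\le\varsigma(\Omega)\}$ is an infinite strip whose direction is transverse to both asymptotic directions of $NRS(\Omega)$. Since by Theorem~\ref{t2parab} each parabolic arm of $NRS(\Omega)$ has quadratically curving boundary, a strip transverse to the arm's asymptotic direction meets the arm in a bounded set (linear width against quadratic curvature), so each fixed competitor contributes only finitely many bad parameters; a finite union of such contributions is finite, and replacing the strict inequality by equality handles part~(ii).

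To make this a proof I must guarantee that only finitely many integer points can \emph{ever} act as competitors, uniformly in $(m,n)$; this is the crux. By Proposition~\ref{orbitsCF} one may choose the fundamental domain with all its integer orbit-vertices inside $\Gamma_A^0(1,0,0)$, the convex hull of the orbits of $(1,0,0)$ and $A(1,0,0)=(a_{11},a_{21},0)$. The statement I need is the two-dimensional analogue of Lemma~\ref{triangle} and Corollary~\ref{general}: as $(m,n)\to\infty$ inside $NRS(\Omega)$, the two orbit-ellipses flatten so that $\Gamma_A^0(1,0,0)$ stays inside a fixed small tubular neighborhood of the triangle with vertices $(1,0,0)$, $(a_{11},a_{21},0)$, $(-a_{11},-a_{21},0)$, and therefore contains no integer point outside the finite set $S$ of integer points of that triangle. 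Every competitor then lies in the fixed finite set $S$, all of whose points sit in the plane $z=0$, so the strip argument applies to the finitely many non-special points of $S$ and yields the desired finiteness.

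The main obstacle is precisely this uniform flattening. The eigenvector and eigenplane asymptotics of Lemma~\ref{triangle} were computed along a single ray, where the second coordinate is frozen, and their error terms carry coefficients that grow with position along the arm. To obtain uniformity I would track these coefficients explicitly and exploit the defining inequality of each arm: on the arm of direction $(-1,0)$ one has $|m|$ bounded below by a quadratic in $n$ (Theorem~\ref{t2parab}), which dominates the linearly growing errors, so that the deviation of the real eigendirection from $(1,0,0)$ and the eccentricity blow-up of the orbit-ellipses are controlled uniformly throughout the arm; the arm of direction $(a_{11},a_{21})$ is treated by the reductions of Corollary~\ref{general} and Lemma~\ref{triangle2}. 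The delicate regime is the coordinated limit along the parabola, where both parameters tend to infinity at once; there the quadratic lower bound from the parabolic structure is exactly what absorbs the linear error terms, and carrying out this domination carefully is where the real work lies.
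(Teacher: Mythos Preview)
The statement you are trying to prove is listed in the paper as Conjecture~\ref{Pr1}, in the section on open problems; the paper does \emph{not} prove it. So there is no ``paper's own proof'' to compare your attempt against. What the paper does prove is the strictly weaker Theorem~\ref{maintheorem}, and the author explicitly records the finiteness over all of $NRS(\Omega)$ as open.

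Your outline is a natural attempt to globalize the ray argument, and the reduction to a finite competitor set $S\subset\{z=0\}$ together with the strip/parabola transversality step is correct once that reduction is in hand. But the step you yourself flag as the ``main obstacle''---the uniform flattening of $\Gamma_A^0(1,0,0)$ over all of $NRS(\Omega)$---is not established, and it is exactly the content of the conjecture. The difficulty is not merely that the error constants in Lemma~\ref{triangle} depend on the transverse coordinate; it is that as $(m,n)$ approaches the parabolic boundary the discriminant tends to~$0$, the pair of complex eigenvalues degenerates toward a real double root, and the orbit ellipses $T_A(\,\cdot\,)$ collapse in a different way than along the interior rays. In that regime the eccentricity estimate $|g_{\max}|/|g_{\min}|\sim |t|^{1/2}$ of Lemma~\ref{triangle} need not hold, and without it $\Gamma_A^0(1,0,0)$ can a priori pick up integer points with $z\neq 0$, which your affine-$L_w$ analysis does not cover at all. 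The sentence ``the quadratic lower bound from the parabolic structure is exactly what absorbs the linear error terms'' is an assertion, not an argument: the quadratic bound $|m|\gtrsim n^2$ controls the \emph{location} of $(m,n)$ but says nothing directly about the shape of the invariant ellipses when the discriminant is small. Until that degeneration is controlled uniformly, the argument remains a heuristic for why the conjecture is plausible rather than a proof.
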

}

If the answer to this conjecture is positive we immediately have
the following general question.

\begin{problem}
Study the asymptotics of the number of $\varsigma$-nonreduced
NRS-matrices with respect to the growth of Hessenberg complexity.
\end{problem}

Denote the conjectured number of $\varsigma$-nonreduced
NRS-matrices of Hessenberg type $\Omega$ by $\#(\Omega)$. Numerous
calculations give rise to the following table for all types with
Hessenberg complexity less than $5$.

\vspace{2mm}

\begin{center}
\begin{tabular}{|c||c||c|c||c|c|c|}
\hline
$\Omega$&$\langle 0,1|0,0,1\rangle$ & $\langle 0,1|1,0,2\rangle$ &
$\langle 0,1|1,1,2\rangle$ & $\langle 0,1|1,0,3\rangle$ &
$\langle 0,1|1,1,3\rangle$ & $\langle 0,1|1,2,3\rangle$\\
\hline \hline
$\varsigma(\Omega)$ & $1$ & $2$ & $2$ & $3$ & $3$ & $3$ \\
\hline
$\#(\Omega)$        & $0$ & $12$ & $12$ & $6$ & $10$ & $10$ \\
\hline
\end{tabular}

\vspace{2mm}

\begin{tabular}{|c||c|c|c||c||c|c|}
\hline
$\Omega$ & $\langle 0,1|2,0,3\rangle$ & $\langle 0,1|2,1,3\rangle$
& $\langle 0,1|2,2,3\rangle$ &
$\langle 1,2|0,0,1\rangle$ & $\langle 0,1|1,0,4\rangle$ & $\langle 0,1|1,1,4\rangle$ \\
\hline \hline
$\varsigma(\Omega)$ & $3$ & $3$ & $3$ & $4$ & $4$ & $4$ \\
\hline
$\#(\Omega)$ & $14$ & $10$ & $10$ & $94$ & $6$ & $8$ \\
\hline
\end{tabular}

\vspace{2mm}

\begin{tabular}{|c||c|c|c|c|c|c||}
\hline
$\Omega$& $\langle 0,1|1,2,4\rangle$ & $\langle 0,1|1,3,4\rangle$
& $\langle 0,1|3,0,4\rangle$ & $\langle 0,1|3,1,4\rangle$ &
$\langle 0,1|3,2,4\rangle$ & $\langle 0,1|3,3,4\rangle$ \\
\hline \hline
$\varsigma(\Omega)$ & $4$ & $4$ & $4$ & $4$ & $4$ & $4$ \\
\hline
$\#(\Omega)$ & $10$ & $8$ & $10$ & $12$ & $8$ & $8$ \\
\hline
\end{tabular}

\end{center}

\vspace{2mm}

{\noindent{\bf RS-matrices.} We conclude this paper with a few
words about real spectra matrices (i.e., about $SL(3,\z)$-matrices
with three distinct real roots). Mostly we consider the family
$H(\langle 0,1|1,0,2\rangle)$, the situation with the other
Hessenberg types is similar.}

Recall that
$$
H_{\langle 0,1|1,0,2\rangle}^{(1,0,1)}(m,n)= \left(
\begin{array}{ccc}
0 &1 &n+1\\
1 &0 &m\\
0 &2 &2n+1\\
\end{array}
\right).
$$
This matrix is of Hessenberg type $\langle 0,1|1,0,2\rangle$, its
Hessenberg complexity equals 2. Hence $H_{\langle
0,1|1,0,2\rangle}^{(1,0,1)}(m,n)$ is $\varsigma$-reduced if and
only if it is not integer conjugate to some matrix of unit
Hessenberg complexity, all such matrices are of Hessenberg type
$\langle 0,1|0,0,1\rangle$.

In Figure~\ref{01_102.1} we show all matrices $H_{\langle
0,1|1,0,2\rangle}^{(1,0,1)}(m,n)$ with
$$
-20\le m,n \le 20.
$$
The square in the intersection of the $m$-th column with the
$n$-th row corresponds to the matrix $H_{\langle
0,1|1,0,2\rangle}^{(1,0,1)}(m,n)$. It is colored in black if the
characteristic polynomial has rational roots. The square is
colored in gray if the characteristic polynomial is irreducible
and there exists an integer vector $(x,y,z)$ with the coordinates
satisfying
$$
-1000\le x,y,z \le 1000,
$$
such that the MD-characteristic of $H_{\langle
0,1|1,0,2\rangle}{(1,0,1)}(m,n)$ equals $1$ at $(x,y,z)$. All the
rest squares are white.

If a square is gray, then the corresponding matrix is
$\varsigma$-nonreduced, see Remark~\ref{DavHes0}. If a square is
white, then we cannot conclude whether the matrix is
$\varsigma$-reduced or not (since the integer vector with unit
MD-characteristic may have coordinates with absolute values
greater than 1000).

\begin{figure}
$$\epsfbox{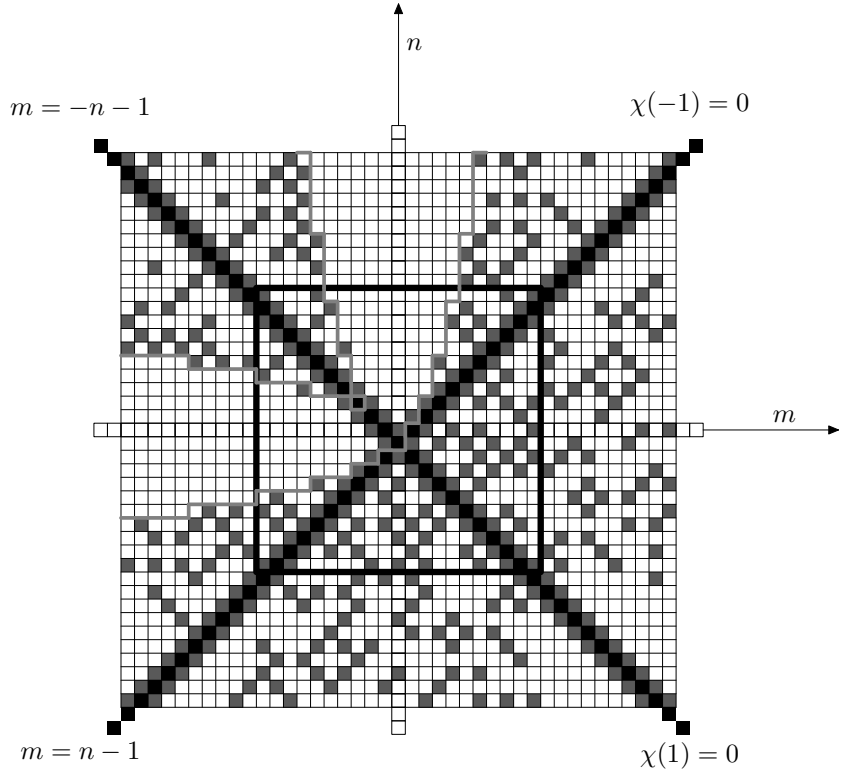}$$
\caption{The family of matrices of Hessenberg type $\langle
0,1|1,0,2\rangle$.}\label{01_102.1}
\end{figure}

It is most probable that white squares in Figure~\ref{01_102.1}
represent $\varsigma$-reduced matrices. We have checked explicitly
all the squares with
$$
-10\le m,n \le 10.
$$
These matrices are contained inside the big black square shown on
the figure. All white squares inside it correspond to
$\varsigma$-reduced matrices.

\vspace{1mm}

We show a boundary broken line between the NRS- and $RS$-squares
in gray.

\vspace{2mm}

{\noindent {\it Remark.} In Figure~\ref{01_102.1} the {\it
NRS-domain} is easily visualized, it almost completely consists of
white squares. While {\it RS-domain} contains relatively large
number of black squares. This indicates a significant difference
between RS- and NRS-cases.
}

\vspace{2mm}

Direct calculations of the corresponding MD-characteristic show
that the following proposition holds.

\begin{proposition}\label{odd}
If an integer $m{+}n$ is odd, then $H_{\langle
0,1|1,0,2\rangle}^{(1,0,1)}(m,n)$ is $\varsigma$-reduced. \qed
\end{proposition}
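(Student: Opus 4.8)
The statement to prove is Proposition~\ref{odd}: if $m+n$ is odd, then the specific matrix $H_{\langle 0,1|1,0,2\rangle}^{(1,0,1)}(m,n)$ is $\varsigma$-reduced. This matrix has Hessenberg complexity $2$, so by the discussion preceding the proposition (and by Remark~\ref{DavHes0}), it is $\varsigma$-reduced precisely when it is \emph{not} integer conjugate to any matrix of unit Hessenberg complexity. Equivalently, using Remark~\ref{DavHes0} which identifies the Hessenberg complexity with the value $\Delta_A(1,0,0)$ of the MD-characteristic, the matrix fails to be $\varsigma$-reduced if and only if there exists a primitive integer vector $w=(x,y,z)$, not equal to the origin, with $\Delta_A(w)=1$. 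So the plan is to show that $\Delta_A(x,y,z)\ne 1$ for \emph{all} nonzero integer vectors $(x,y,z)$ whenever $m+n$ is odd.

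**Setting up the parity obstruction.** The main tool is an explicit formula for the MD-characteristic. Recall $\Delta_A(w)=V(P(A,w))$ is the volume of the parallelepiped spanned by $w$, $Aw$, $A^2 w$, which is $|\det(w\mid Aw\mid A^2 w)|$. I would first write $M=H_{\langle 0,1|1,0,2\rangle}^{(1,0,1)}(m,n)$ explicitly as in the display preceding the proposition, then compute the polynomial $D(x,y,z)=\det\big(w\mid Mw\mid M^2w\big)$ as a cubic form in $x,y,z$ whose coefficients are polynomials in $m,n$. The key claim I expect to extract is a \emph{congruence} statement: when $m+n$ is odd, one has $D(x,y,z)\equiv 0 \pmod 2$ identically in $x,y,z$, or more precisely $D(x,y,z)$ is always even (equivalently, never equal to $\pm 1$) for integer inputs. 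Since $\Delta_A(w)=|D(w)|$, this would immediately force $\Delta_A(w)\ne 1$ for every integer $w$, completing the argument.

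**Carrying out the computation.** The concrete step is therefore to reduce the cubic form $D(x,y,z)$ modulo $2$ and check that under the hypothesis $m+n\equiv 1\pmod 2$ it reduces to an even-valued form on $\z^3$. In practice I would compute $D \bmod 2$ as a form over $\mathbb{F}_2$ with coefficients depending on $m \bmod 2$ and $n\bmod 2$; the condition $m+n$ odd splits into the two subcases $(m,n)\equiv(0,1)$ and $(m,n)\equiv(1,0)\pmod 2$, and in each subcase I would verify that every monomial coefficient of $D$ vanishes mod $2$ (so that $D$ itself is always even on $\z^3$). This is exactly the ``direct calculation of the corresponding MD-characteristic'' that the paper alludes to just before stating the proposition.

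**Expected main obstacle.** The argument is structurally short, so the only real work is the explicit volume computation and its reduction modulo $2$; the hard part will be organizing the cubic form $D(x,y,z)$ cleanly enough that the parity of each coefficient can be read off as a function of $m \bmod 2$ and $n \bmod 2$, and then confirming that the even-valuedness survives on \emph{all} of $\z^3$ rather than just on the primitive vectors. One subtlety to watch is that evenness of $D$ as a polynomial coefficient-by-coefficient is sufficient but I should be careful that cross terms like $xy$, $x^2y$ etc.\ which can be odd-coefficient yet even-valued are handled correctly; the safest route is to verify that $D(x,y,z)\equiv x^2y+xy^2+\cdots \pmod 2$ collapses entirely, i.e.\ that $D$ lies in the ideal generated by $2$ together with the relations $u^2\equiv u$ on $\mathbb{F}_2$, guaranteeing $D(w)\equiv 0\pmod 2$ for all $w\in\z^3$. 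Granting that the coefficient bookkeeping goes through in both parity subcases, the conclusion $\Delta_A(w)\ge 2$ for all nonzero integer $w$ follows, and hence $M$ is $\varsigma$-reduced.
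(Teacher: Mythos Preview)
Your approach is correct and is exactly what the paper means by ``direct calculations of the corresponding MD-characteristic'': the paper gives no further detail, and your parity argument via the cubic form $D(x,y,z)=\det(w\mid Mw\mid M^2w)$ is the intended computation. One small simplification you may find useful: in each of the two subcases $(m,n)\equiv(0,1)$ and $(1,0)\pmod 2$ one checks directly that $M^2\equiv I\pmod 2$, so $M^2w\equiv w\pmod 2$ and the determinant has two congruent columns, giving $D(w)\equiv 0\pmod 2$ for every $w\in\z^3$ without needing to expand the cubic form coefficient by coefficient.
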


{\noindent {\it Remark.} From one hand Proposition~\ref{odd}
implies the existence of rays entirely consisting of
$\varsigma$-reduced matrices. From the other hand in contrast to
the NRS-matrices this is not always the case for RS-matrices. For
instance, all matrices corresponding to integer points of the
lines
$$
1)m=n; \quad 2)m=n+2; \quad 3)m=-n; \quad 4)m=-n-2; \quad 5)
n=3m-4; \quad 6)m=3n+6
$$
are $\varsigma$-reduced (we do note state that the list of such
lines is complete).}

\vspace{2mm}

So Theorem~\ref{maintheorem} does not have a direct generalization
to the RS-case and we end up with the following problem.
\begin{problem}
What is the percentage of $\varsigma$-reduced matrices among
matrices of a given Hessenberg type $\Omega$?
\end{problem}
It is more likely that almost all Hessenberg matrices are
$\varsigma$-reduced (except for some measure zero subset).

\bibliographystyle{plain}      % (uses file "plain.bst")
\bibliography{sl3zz}

\vspace{5mm}

\end{document}